\newtheorem{thm}{Theorem}
\newtheorem{prop}[thm]{Proposition}
\newtheorem{lemma}[thm]{Lemma}
\theoremstyle{definition}
\newtheorem{defin}[thm]{Definition}
\newtheorem*{rep@theorem}{\rep@title}
\newcommand{\newreptheorem}[2]{%
\newenvironment{rep#1}[1]{%
 \def\rep@title{#2 \ref{##1}}%
 \begin{rep@theorem}}%
 {\end{rep@theorem}}}
\newcommand\power{\mathop\mathcal P}
\newcommand\R{r}
\newcommand\G{S_{\power(\Omega)}}
\newcommand\Q{q}
\newcommand\Aut{\mathop{\text{Aut}}\nolimits}
\newcommand\imp\rightarrow
\newcommand\ekv\leftrightarrow
\newcommand{\Fix}{\mathop{\text{Fix}}\nolimits}
\newcommand{\Gal}{\mathop{\text{Gal}}\nolimits}
\newcommand{\Inv}{\mathop{\text{Inv}}\nolimits}
\newcommand{\Sim}{\mathop{\text{Sim}}\nolimits}
\newcommand{\liim}{\ensuremath{\mathop{\La_{\infty\infty}^-}}}
\newcommand{\lii}{\ensuremath{\mathop{\La_{\infty\infty}}}}
\newcommand{\loo}{\ensuremath{\mathop{\La_{\omega\omega}}}}
\newcommand{\lio}{\ensuremath{\mathop{\La_{\infty\omega}}}}
\newcommand{\La}{\mathscr{L}}
\let\oldmarginpar\marginpar\renewcommand\marginpar[1]{\-\oldmarginpar[\raggedleft\footnotesize
#1]%
{\raggedright\footnotesize #1}}
\title[Invariance and definability]{Invariance and definability, with and without equality}
\author{Denis Bonnay}
\address{D\'epartement de Philosophie\\
Universit\'e Paris Ouest Nanterre\\
200, avenue de la R\'epublique\\
92001 Nanterre CEDEX\\
FRANCE}
\email{denis.bonnay@u-paris10.fr}
\author{Fredrik Engstr\"om}
\address{Department of Philosophy, Linguistics and Theory of Science \\ Gothenburg University \\ Box
200\\ 405 30 Gothenburg\\ SWEDEN}
\email{fredrik.engstrom@gu.se}
\thanks{Both authors were partially supported by the EUROCORE LogICCC LINT program and the
Swedish Research Council.}
\begin{document}
\begin{abstract}
The dual character of invariance under transformations and definability by some operations has been
used in classical work by for example Galois and Klein. Following Tarski, philosophers of logic have claimed
that logical notions themselves could be characterized in terms of invariance. In this paper, we generalize a correspondence due to Krasner between invariance under groups of permutations and definability in $\La_{\infty\infty}$ so as to cover the cases (quantifiers, logics without equality) that are of interest in the logicality debates, getting McGee's theorem about quantifiers invariant under all permutations and definability in pure $\La_{\infty\infty}$ as a particular case. We also prove some optimality results along the way, regarding the kind of relations which are needed so that every subgroup of the full permutation group is characterizable as a group of automorphisms.

\end{abstract}

\maketitle

\section{Introduction}

Permutations and relations may be seen as two ways of encoding information about a domain of objects. In the case of relations, it is pretty obvious that some piece of information is encoded: a difference is made between objects between which the relation holds, and objects between which it does not.
This is also true, if slightly less obvious, of permutations. By taking into account a permutation,
it is recognized that some differences between objects do not matter: given the purpose at hand, one
may as well consider that such and such objects have been interchanged.


Moreover, permutations and relations may be seen as two \textit{dual} ways of encoding information about
objects. Permutations tell us which objects are similar (they can be mapped to each other), relations tell us which objects are
different (think of a unary relation true of one object and false of another). This duality is at work whenever mathematicians or physicists consider groups of isomorphisms and classes of invariants.\footnote{Klein's Erlanger Program \cite{Klein:1872} and the
systematic study of the duality between groups of transformations on a space and geometries on that
space is a famous example. It was also a direct inspiration to Tarski \cite{Tarski:86}.}
A group of automorphisms is a group of permutations that respect a given structure: permutations are extracted
from a domain equipped with relations. A class of invariants consists of a class of relations that
respect a given group of permutations: extensions for symbols to be interpreted in the domain of objects
under consideration are extracted from permutations.

In logic, this duality has been extensively investigated within model theory. Typical questions include the characterization of structures that can be described in terms of their automorphism groups, or more generally how much automorphism groups say about the structures they come from (see \cite{Cameron:1990} on model theory and groups). Within the philosophy of logic, the duality between permutations and extensions comes to the forefront in debates regarding the characterization of logical constants. Invariance under all permutations has been taken as the formal output of a conceptual analysis of what it is to be a logical notion. Thus, it has been proposed as a mathematical counterpart to the generality of logic (by Tarski himself, \cite{Tarski:86}) and to its purely formal nature (by Sher \cite{Sher:91} and MacFarlane \cite{MacFarlane:00}).

The aim of the present paper is to bring these two traditions closer together by adapting some results in model theory to the cases that are discussed by philosophers of logic interested in invariance as a logicality criterion. As a case in point, we will show how the much quoted theorem by McGee in \cite{McGee:96} about invariance under all permutations and definability in the pure infinitary logic \lii~can be seen as following from a suitable extension of much earlier work by Krasner (\cite{Krasner:38}, \cite{Krasner:50}). Our motivation for such a generalization is that a conceptual assessment of a given logicality criterion stated in terms of invariance properties should be grounded in a prior evaluation of the significance of the general duality between groups of permutations and sets of relations and in an independent assessment of what invariance under groups of permutations (or other kind functions) can and cannot do in the contexts of interest. From a technical perspective, our starting point will be Krasner's work and his so-called abstract Galois theory. Application to cases of interest in the logicality debates involve two generalizations. In order to cover the interpretation of quantifiers, we will to need to consider structures equipped with second-order relations. In order to cover scenarii in which the logicality of identity is not presupposed, we will to need to consider functions that are not injective, as suggested by Feferman \cite{Feferman:99}.

Our main result (Theorem \ref{cor} below) provides a unified perspective on the duality by establishing a correspondence between certain monoids of relations and sets of operations closed under definability in an infinitary language without equality. In Section \ref{section:krasner}, we recall Krasner's correspondence between classes of relations of possibly infinite arity closed under definability in \lii~and subgroups of the permutation group. We show how Krasner's correspondence relates to the standard model-theoretic characterization of groups of permutations which are the automorphism groups of a first-order structure (with relations of finite arity) as closed groups in the topology of point-wise convergence. Section \ref{section:extending} is devoted to extending Krasner's correspondence to second-order relations. We prove some optimality results along the way: what kind of relations are really needed so that every subgroup is the automorphism group of a class of relations? In the first-order case, infinitary relations were necessary. Second-order binary relations suffice to do the job, but second-order unary relations do not. The next section is devoted to relaxing the assumption of injectivity, extending the correspondence to logics without equality. Finally, we draw some lessons in Section \ref{section:final} for the logicality debate.

\section{Abstract Galois theory and permutation groups} \label{section:krasner}

\subsection{Krasner's abstract Galois theory}

To begin with, we shall recall the fundamentals of Krasner's \textit{abstract} Galois theory,
following in particular Poizat \cite{Poizat:85}. The starting point of Krasner is \textit{classical}
Galois theory, in which a particular instance of the general duality we have outlined is at play.
Classical Galois theory is the study of the duality between the subgroups of the automorphism group
of certain types of field extensions, called Galois extensions, and the intermediate fields in the
field extension.

A field is an algebraic structure with addition, multiplication, inverses and units. If $k \subseteq
K$ are both fields then we say that $K:k$ is a field extension; it is Galois if it is algebraic,
normal, and separable (definitions may be found in \cite{Lang:02}). Given a field extension $K:k$
let $G$ be the Galois group of $K:k$, i.e., the group of automorphisms of $K$ fixing the elements of
$k$ point-wise. Define the following pair of mappings:

\begin{align*}
  \Fix(H)&=\set{a \in K| ha=a \text{ for all $h \in H$}} \\
  \Gal(A) &= \set{g \in G | ga=a \text{ for all $a \in A$}}
\end{align*}
where $H$ is any subset of $G$ and $A$ any subset of the domain of $K$. The fundamental theorem of
Galois theory says that, for finite Galois extensions, $\Fix(\Gal(A))$ is the smallest subfield of
$K$ including $k \cup A$ and that $\Gal(\Fix(H))$ is the smallest subgroup of $G$ including $H$.
There is thus a one-to-one correspondence between fields $K'$ such that $k \subseteq K' \subseteq K$
and subgroups of $G$.

According to Krasner, ``the true origin of Galois theory [does not lie] in algebra, in the strict
sense of the word, but in logic'' (\cite{Krasner:50}, p. 163). This claim is to be substantiated by
exhibiting a general duality, between automorphism groups and relations definable over a structure.
Consider a domain $\Omega$, $S_\Omega$ the full symmetric group on $\Omega$ and a set $\R$ of relations
on $\Omega$; these relations may include infinite arity relations regarded as subsets of
$\Omega^\alpha$ where $\alpha$ is some ordinal number. We now define the following pair of mappings:

\begin{align*}
  \Inv(H) &= \set{R \subseteq \Omega^{\alpha} | hR=R \text{ for all $h \in H$},
  \alpha \leq |\Omega|}  \\
\Aut(\R) &= \set{g \in S_\Omega | gR=R \text{ for all $R \in \R$}}
\end{align*}

\noindent The logic $\lii$ is the infinitary generalization of the predicate calculus where formulas
are built by means of arbitrarily long conjunctions and disjunctions and by means of arbitrarily
long universal and existential quantifiers sequences (see e.g. \cite{Karp:1964}).
The $\lii$-closure of $\R$ is the set of relations definable in $\lii$ from relations in $\R$. The
following Theorem is essentially\footnote{The qualification `essentially' is due to the fact that
Krasner thinks directly in terms of closure under `logical' operations on relations rather in terms
of definability in a formal language.} shown by Krasner in \cite{Krasner:38}:

\begin{thm}\label{thm:kras1} Let $\Omega$ and $S_\Omega$ be as above, $H \subseteq S_\Omega$ any set of
  permutations and $\R$ any set of relations on $\Omega$ of arities at most $|\Omega|$.
  \begin{enumerate}
    \item $\Inv(\Aut(\R))$ is the $\La_{\infty\infty}$-closure of $\R$.
    \item $\Aut(\Inv(H))$ is the smallest subgroup of $S_\Omega$ including $H$.

  \end{enumerate}
\end{thm}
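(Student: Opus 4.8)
The plan is to prove each part as one easy inclusion together with one substantial inclusion, with both substantial inclusions resting on the same device: coding a bijective enumeration of the whole domain $\Omega$, which is exactly what the unbounded quantifier strings of \lii{} make available. Throughout write $\kappa=|\Omega|$ and fix once and for all a bijective enumeration $(c_j)_{j<\kappa}$ of $\Omega$.

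For part (1), the easy inclusion is that the \lii-closure of $\R$ is contained in $\Inv(\Aut(\R))$. I would prove by induction on the construction of formulas that every relation definable in \lii{} from $\R$ is preserved by each $g\in\Aut(\R)$: atomic formulas built from $\R$ are preserved by the very definition of $\Aut(\R)$, equality is preserved because $g$ is a bijection, the (possibly infinitary) Boolean connectives preserve invariance, and the (possibly infinitary) quantifier strings preserve invariance precisely because $g$ ranges bijectively over $\Omega$. The main obstacle is the converse inclusion, that every $\Aut(\R)$-invariant $R\subseteq\Omega^{\alpha}$ (with $\alpha\le\kappa$) is \lii-definable from $\R$. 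For each tuple $\bar a=(a_i)_{i<\alpha}\in R$ I would choose indices $\sigma(i)<\kappa$ with $a_i=c_{\sigma(i)}$ and form a formula $\phi_{\bar a}(\bar x)$ asserting the existence of pairwise distinct $(y_j)_{j<\kappa}$ that realise exactly the same atomic $\R$-facts as $(c_j)_{j<\kappa}$, that exhaust the domain via the clause $\forall z\,\bigvee_{j<\kappa} z=y_j$, and that satisfy $x_i=y_{\sigma(i)}$ for all $i<\alpha$. I claim $\phi_{\bar a}$ defines exactly the orbit $\{g\bar a: g\in\Aut(\R)\}$: if $\bar b$ satisfies it with witnesses $(d_j)_{j<\kappa}$, then $(d_j)_{j<\kappa}$ is again a bijective enumeration of $\Omega$ carrying the same atomic diagram, so $g\colon c_j\mapsto d_j$ is a well-defined bijection preserving every relation in $\R$, whence $g\in\Aut(\R)$ and $g\bar a=\bar b$; conversely any image $g\bar a$ is witnessed by $(gc_j)_{j<\kappa}$. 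Since $R$ is $\Aut(\R)$-invariant it is a union of such orbits, so $\bigvee_{\bar a\in R}\phi_{\bar a}(\bar x)$ is an \lii-formula defining $R$ from $\R$.

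The place where the full strength of \lii{} is indispensable, and where \lio{} would not suffice, is the simultaneous existential quantification over the $\kappa$-many variables $y_j$ together with the exhaustion clause: this is what forces the witnesses to enumerate all of $\Omega$, and thereby upgrades a merely local atomic match into a genuine automorphism. I expect the only delicate bookkeeping to be the treatment of repeated entries and of the distinctness conjuncts, both of which are handled automatically by the clauses $x_i=y_{\sigma(i)}$ and by requiring the $y_j$ pairwise distinct.

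For part (2), write $G$ for the smallest subgroup of $S_\Omega$ including $H$. One inclusion is immediate: $\Aut(\Inv(H))$ is a group, and it contains $H$ because every $h\in H$ fixes every relation in $\Inv(H)$ by the definition of $\Inv$; hence $G\subseteq\Aut(\Inv(H))$. For the reverse inclusion I would reuse the enumeration trick. Consider the $G$-orbit $O=\{g\bar c: g\in G\}\subseteq\Omega^{\kappa}$, where $\bar c=(c_j)_{j<\kappa}$. This relation is $G$-invariant, since $hG=G$ for $h\in G$, and therefore $H$-invariant because $H\subseteq G$, so $O\in\Inv(H)$; note its arity is $\kappa=|\Omega|$, which is admissible. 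Now take any $g\in\Aut(\Inv(H))$: it must fix $O$, so $g\bar c\in O$, i.e.\ $g\bar c=g'\bar c$ for some $g'\in G$. Because $(c_j)_{j<\kappa}$ enumerates all of $\Omega$, the equality $g\bar c=g'\bar c$ forces $g$ and $g'$ to agree everywhere, whence $g=g'\in G$. This yields $\Aut(\Inv(H))\subseteq G$ and completes the proof. As in part (1), it is the arity-$|\Omega|$ relation encoding a full enumeration that does the work, which is why relations of infinite arity cannot be dispensed with here.
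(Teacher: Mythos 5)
Your proof is correct. For part (1) it is in essence the paper's own method: the paper does not reprove Theorem \ref{thm:kras1} directly but proves its second-order generalization (Lemma \ref{lemma:Krasner}) by exactly your device, namely an \lii-expressible assertion that a $|\Omega|$-tuple of variables bijectively enumerates the domain (pairwise distinctness plus the exhaustion clause $\forall z\,\bigvee_j z=y_j$) while reproducing the atomic diagram of $\R$, so that any tuple of witnesses induces an automorphism; the only cosmetic difference is that you define each $\Aut(\R)$-orbit by an existential formula and disjoin over $R$, whereas the paper packages the same content into a single universal formula $\forall X\,\bigl(\bigl(\bigwedge_S \Delta_S(\bar S)\land\psi\bigr)\rightarrow\bigvee_U \Delta_U(\bar R)\bigr)$ --- dual but equivalent formulations. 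For part (2) your route is genuinely more direct: the paper obtains this half via topological machinery, proving (Proposition \ref{prop:generalizedtopology}) that automorphism groups of structures with relations of arity less than $\alpha$ are exactly the subgroups closed in the $\alpha$-topology, and then observing that the $(|\Omega|\mathord+1)$-topology is discrete because $G_{\bar a,g\bar a}=\{g\}$ when $\bar a$ enumerates $\Omega$; you extract just the kernel of that observation, using the single invariant relation $O=\{g'\bar c \mid g' \text{ in the generated group}\}$ of arity $|\Omega|$, whose invariance pins any $g\in\Aut(\Inv(H))$ to a member of the generated group, since agreement on a full enumeration is agreement everywhere. The paper's detour buys more --- the general correspondence between arity bounds and $\alpha$-topologies, and the counterexamples via non-closed subgroups --- while your version buys brevity and self-containedness. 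One point to make explicit when you write this up: ``realise exactly the same atomic $\R$-facts'' must be rendered as the conjunction, over every relation in $\R$ and every (possibly infinite) index tuple, of the atomic formula or its negation according to the diagram of $(c_j)_{j<\kappa}$; this conjunction has size at most $2^{|\Omega|}$ per relation, which \lii{} accommodates but which deserves a sentence.
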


\noindent Thus, there is a one-to-one correspondence between the subgroups of the full
symmetric group $S_\Omega$ of $\Omega$ and the sets of relations closed under definability in
$\La_{\infty\infty}$.

How does this relate with the original correspondence of classical Galois theory? The first half of
the fundamental theorem of Galois Theory can be derived from the first half of Theorem
\ref{thm:kras1}.\footnote{To our knowledge, the question regarding the other halves is still open.}
Given a field extension $K:k$ and $A \subseteq K$, let $\R$ be the set of addition, multiplication,
inverse and the relation $R_a=\set{a}$ for every $a \in k\cup A$. By definition, we have that
$\Gal(A)=\Aut(\R)$. Now $\Inv(\Aut(\R))$ and $\Fix(\Gal(A))$ are not exactly on a par, because
Krasner's theory takes sets of {\em relations} as values instead of, as in the classical Galois
theory, sets of {\em points}. However, the set of fixed points can be extracted from the set of
fixed relations by observing that an
element $a \in \Omega$ is fixed by a permutation iff the singleton relation $\set{a}$ is fixed.
Given that $\Inv(\Aut(\R))$ is closed under definability in $\La_{\infty\infty}$, it is immediate
that this set of fixed points is a subfield of $K$. In order to get the first half of the
fundamental Galois theorem, one also needs to show that it is included in any subfield extending $k
\cup A$. Krasner proves this in \cite{Krasner:50} by showing that if $R_a$ is definable in $\lii$
over $\R$, then $a$ can be reached from elements in $k \cup A$ by means of the field
operations.\footnote{This amounts to a property of eliminability of quantifiers. Krasner gives a
general characterization of structures having this property. See Poizat \cite{Poizat:85} regarding
connections with contemporary model theory.}

\subsection{Permutation groups and topology}

Krasner's result shows that for any group of permutations of $\Omega$ there is some set of relations $\R$ on $\Omega$ such that $\Aut(\R)$ is exactly that group.
This is not true if we restrict the relations to be of finite arity.
If $\R$ is a set of relations of finite arity then $(\Omega,\R)$ is a `standard' first-order
relational structure and, obviously,
the group $\Aut(\R)$ coincides with the automorphism group of the first-order structure
$(\Omega,\R)$.
However it is not true that every subgroup of the full permutation group is the automorphism group
of a
first-order structure (with only finite arity relations). We will give a detailed proof of the well
known characterization of autormophism groups of first-order structures in terms of topological
properties. This is done partly to set up the scene for the next section (but see also
\cite{Cameron:1990}).

First we need to introduce the topology, the product topology, on the group of permutations that we
will be
using.
Let $\Omega$ be any infinite set equipped with the discrete topology, i.e., the topology in which
every set
is open. Let $G$ be the full symmetric group $S_\Omega$ on $\Omega$, i.e., the group of all permutations of $\Omega$.
The stabilizer $G_{\bar a}$ of a finite tuple
$\bar a \in \Omega^k$ is the set of all permutations in $G$ keeping all elements in $\bar
a$ fixed, i.e., $$G_{\bar a}=\set{g \in G | g\bar a=\bar a}.$$

The symmetric group $S_\Omega$ on $\Omega$ is a subset of $\Omega^\Omega$; let $G$ inherit the product
topology from
$\Omega^\Omega$, i.e., a basis for this topology are the (right or left) cosets of stabilizers
of finite tuples, that is, sets of the following form:
$$G_{\bar a,\bar b}=\set{g \in G| g\bar a=\bar b},$$
where $\bar a, \bar b \in \Omega^k$.\footnote{This topology makes $G$ into a topological group which
means that
the operations of multiplication (in this case composition) and taking inverses are
continuous. As with topological groups in general, all open subgroups are also closed: let $H$ be
an open subgroup of $G$ then the complement of $H$ is the union of all
cosets $gH$ of $H$ which are disjoint from $H$, and thus the complement of $H$ is
open. }

This is the topology of pointwise convergence: The (topological) closure of a set $A \subseteq G$ consists of all
permutations $g \in G$ such that for every $k$ and every $\bar a \in \Omega^k$ there is $f \in A$
such that
$f(\bar a)=g(\bar a)$.\footnote{If $\Omega=\set{a_0,a_1,\ldots}$ is countable then the topology is
completely
metrizable by the metric $d(g,h)=1/2^k$ if $g \neq h$ and $k$ is the least number such that
$ga_k\neq
ha_k$ or $g^{-1}a_k \neq h^{-1}a_k$, and $d(g,g)=0$. Thus, a sequence of permutations
$g_0,g_1,\ldots$ converges to $g$ if and only if, for any $k$, $g_na_k=ga_k$ for all sufficiently
large $n$.
It should also be noted that $G$ is not a closed subset of $\Omega^\Omega$ which
means that there are sequences $g_i$ of permutations of $\Omega$ converging
to a function $f$ which is not a permutation: Let $\Omega=\mathbb N$, the sequence
$$(0,1), (0,1,2), (0,1,2,3), \ldots$$
converges to the function $f:n\mapsto n+1$ which is not a permutation.}

Given a subgroup $H$ of $G$ let $O^H_{\bar a}=\set{h\bar a | h \in H}$ be the
orbit of $\bar a$ under $H$. Orbits provide a canonical way of `translating' a
group of permutations on a domain into relations on that domain. $(\Omega,O^H_{\bar a})_{\bar a
\in \Omega^k}$ is
thus a first-order relational structure, called the canonical structure of $H$.

The following well-known folklore result characterizes the subgroups of $G$ which are automorphism groups of a first-order
structure (with only finite arity relations):

\begin{prop}\label{prop:3}
  Let $H$ be a subgroup of $G$. The following are equivalent:
  \begin{enumerate}
    \item\label{structure} There is a first-order structure $M$ with domain $\Omega$ whose
      automorphism group is $H$.
\item\label{canonical} $H$ is the automorphism group of the canonical structure of $H$, $(\Omega,O^H_{\bar a})_{\bar a
\in \Omega^k}$.
    \item\label{closed} $H$ is closed.
  \end{enumerate}
\end{prop}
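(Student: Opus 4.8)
The plan is to prove the cycle of implications (\ref{canonical}) $\imp$ (\ref{structure}) $\imp$ (\ref{closed}) $\imp$ (\ref{canonical}). The step (\ref{canonical}) $\imp$ (\ref{structure}) is immediate: the canonical structure $(\Omega,O^H_{\bar a})_{\bar a \in \Omega^k}$ is itself a first-order relational structure of finite arity, so if $H$ is its automorphism group then that structure already witnesses (\ref{structure}). The content of the proposition therefore lies in the remaining two implications, and the genuine work is concentrated in the last one.

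For (\ref{structure}) $\imp$ (\ref{closed}), I would let $M$ be a first-order structure with $\Aut(M)=H$ and show directly that $H$ contains its closure. Suppose $g \in G$ lies in the closure of $H$; I want $g \in \Aut(M)$. Fixing a relation $R \subseteq \Omega^{n}$ of $M$ and a tuple $\bar a \in \Omega^{n}$, the definition of closure supplies some $h \in H$ with $h\bar a = g\bar a$, and since $h$ is an automorphism, $\bar a \in R \ekv h\bar a \in R \ekv g\bar a \in R$. As $\bar a$ and $R$ were arbitrary and $g$ is already a permutation, $g$ preserves every relation of $M$ in both directions, so $g \in \Aut(M)=H$. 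This step is routine; the only point worth flagging is that finiteness of the arities is exactly what makes the finite-tuple approximation furnished by the closure enough to decide membership in each $R$.

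For (\ref{closed}) $\imp$ (\ref{canonical}), write $C$ for the canonical structure. First I would verify the easy inclusion $H \subseteq \Aut(C)$: given $h \in H$ and a basic relation $O^H_{\bar b}$, a tuple $\bar c$ lies in $O^H_{\bar b}$ iff $\bar c = k\bar b$ for some $k \in H$, whence $h\bar c = (hk)\bar b$ with $hk \in H$, so $h\bar c \in O^H_{\bar b}$; applying the same reasoning to $h^{-1}$ gives the converse, so $h$ preserves every orbit relation. The substantive inclusion is $\Aut(C) \subseteq H$, where closedness is used. Given $g \in \Aut(C)$, I claim $g$ lies in the closure of $H$. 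For a finite tuple $\bar a \in \Omega^{k}$, the identity lies in $H$ so $\bar a \in O^H_{\bar a}$; since $g$ preserves this orbit relation we get $g\bar a \in O^H_{\bar a}$, i.e. $g\bar a = h\bar a$ for some $h \in H$. As $\bar a$ was arbitrary, $g$ agrees with an element of $H$ on every finite tuple, so $g$ is in the closure of $H$, and closedness then forces $g \in H$. Combining the two inclusions gives $H = \Aut(C)$.

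The main obstacle is precisely the inclusion $\Aut(C) \subseteq H$. The conceptual point is that the orbit relations $O^H_{\bar a}$ are designed to record, for each finite tuple, exactly which images are achievable by elements of $H$; preserving all of them therefore forces agreement with $H$ on every finite tuple, that is, membership in the closure $\overline{H}$. The hypothesis of closedness does nothing more than convert this closure membership into genuine membership in $H$. Indeed, without it the argument (together with the fact that $\Aut(C)$ is itself closed, an instance of (\ref{structure}) $\imp$ (\ref{closed})) shows only that $\Aut(C)=\overline{H}$, which is the correct general statement and makes transparent why closedness is the exact condition separating (\ref{canonical}) from the generic case.
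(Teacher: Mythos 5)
Your proposal is correct and follows essentially the same route as the paper: the same cycle of implications, the same finite-tuple approximation argument for (\ref{structure}) $\Rightarrow$ (\ref{closed}), and the same use of the orbit relation $O^H_{\bar a}$ (via $\bar a \in O^H_{\bar a}$ and preservation under $g$) to place any automorphism of the canonical structure in the closure of $H$. Your closing observation that the argument in fact shows $\Aut(C)$ equals the closure of $H$ is also made explicitly in the paper, immediately after its proof.
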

\begin{proof}
\eqref{structure} implies \eqref{closed}: Let $g \in G$ be in the closure of $H$, i.e., for any
$\bar a \in \Omega^k$ there is $h \in \Aut(M)$ which is identical to $g$ on $\bar a$. Let $R$ be a
  relation in $M$, $\bar a \in \Omega^k$ and $h \in \Aut(M)$ such that $g\bar a=h\bar a$,
  then $R\bar a$ iff $Rh\bar a$ iff $Rg\bar a$ and so
$R$ is invariant under $g$. Similarly we can show that any relation, constant and function in $M$ is
invariant under $g$; thus $g \in \Aut(M)$; and $\Aut(M)=H$ is closed.

  \eqref{canonical} implies \eqref{structure}: Obvious.

  \eqref{closed} implies \eqref{canonical}: Let $M$ be the canonical structure of
  $H$. Clearly $H \subseteq \Aut(M)$ since $H$ respects
  all its orbits, i.e., for all $h \in H$, $\bar a \in O^H_{\bar b}$ iff $h\bar a
  \in O^H_{\bar b}$. Assume $g \in \Aut(M)$ and $\bar a \in \Omega^k$, then
  $g\bar a \in O^H_{\bar a}$ and so there is an $h \in H$ such that $g\bar a=h\bar a$.
This shows that $g$ is in the closure of $H$; but since $H$ is closed we have
  $\Aut(M) \subseteq H$, proving that $H$ is the automorphism group of $M$.
\end{proof}

The proof actually shows that for any subgroup $H$ of $G$ the
automorphism group of the canonical structure of $H$ is the closure of $H$.

In order to see that not all subgroups of $G$ are automorphism groups of first-order
structures, it is, using the proposition, sufficient to find a subgroup of $G$ which is not closed.
Let the support of a permutation $g$ of $\Omega$ be the set of points moved by $g$, i.e., the set
$\set{a \in \Omega | ga \neq a}$. The set $H$ of permutations with finite
support is a non-closed subgroup of $G$: Take any permutation $g \in G$ and
any $\bar a \in \Omega^k$ we can find $h \in H$ such that $g\bar a = h\bar a$ by
extending the bijection $g|\bar a: \set{\bar a} \to \set{g\bar a}$ to a permutation of
$A=\set{\bar a} \cup \set{g\bar a}$. Let then $h$ be this permutation on $A$ and the identity
function outside $A$. This shows that the closure of $H$ is the full group $G$, and since not every
permutation has a finite support, $H$ is not closed.

Hence the use of relations of infinite arity in Krasner's correspondence is indeed necessary: For
non-closed $H$, if $H=\Aut(M)$ then $M$ includes relations of infinite arity.

There is a connection between the allowed arities of relations in the structures and the topological
closure property characterizing automorphism groups as the following proposition shows.
To see this, let us define the $\alpha$-topology, for $\alpha$ an infinite ordinal, on $G$ in the
following manner. The basic open sets are of the form
$$G_{\bar a,\bar b}\set{g \in G| g\bar a=\bar b},$$
where $|\bar a|=|\bar b| < \alpha$. The $\omega$-topology
coincides with the ordinary product topology described above.
It should be noted that the
$\beta$-topology is a refinement of the $\alpha$-topology if $\alpha\leq\beta$,
which means that an open set in the $\alpha$-topology is also open in the
$\beta$-topology. If we relax the definition of a first-order structure to include
relations of arity less than $\alpha$ and define $\alpha$-orbits of a subgroup $H$ to be
$$O^H_{\bar a}=\set{h\bar a | h \in H},$$ where now $\bar a \in \Omega^\alpha$;
then the proof of Proposition \ref{prop:3}
goes through with the canonical structure of $H$ replaced by $(\Omega, O^H_{\bar a})_{\bar a \in
\Omega^\alpha}$
 and `closed' replaced by
`closed in the $\alpha$-topology,' giving the following generalization:

\begin{prop}\label{prop:generalizedtopology}
  Let $H$ be a subgroup of $G$ and $\alpha$ an infinite ordinal, then the following are equivalent:
  \begin{enumerate}
    \item\label{structure2} There is a first order structure $M$ with domain
      $\Omega$ and relations of arity less than $\alpha$ whose automorphism group is $H$.
    \item\label{canonical2} $H$ is the automorphism group of the relational
      structure $(\Omega,O^H_{\bar a})_{\bar a \in\Omega^\alpha}$.
    \item\label{closed2} $H$ is closed in the $\alpha$-topology.
  \end{enumerate}
\end{prop}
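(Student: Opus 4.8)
The plan is to reuse the proof of Proposition~\ref{prop:3} essentially unchanged, making three substitutions throughout: finite tuples become tuples of length $<\alpha$, the product topology becomes the $\alpha$-topology, and the canonical structure becomes $(\Omega, O^H_{\bar a})_{\bar a}$ with $\bar a$ ranging over tuples of length $<\alpha$, so that its orbit relations have arity $<\alpha$ as required by \eqref{structure2}. I would establish the cycle \eqref{structure2}$\Rightarrow$\eqref{closed2}$\Rightarrow$\eqref{canonical2}$\Rightarrow$\eqref{structure2}.

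For \eqref{structure2}$\Rightarrow$\eqref{closed2}, suppose $H=\Aut(M)$ with all relations of $M$ of arity $<\alpha$, and let $g$ lie in the $\alpha$-closure of $H$. By definition of the $\alpha$-topology, for every $\beta<\alpha$ and every $\bar a\in\Omega^\beta$ there is $h\in H$ with $g\bar a=h\bar a$. Fixing a relation $R$ of $M$ of arity $\beta<\alpha$ and a tuple $\bar a$ of that length, and choosing such an $h$, we get $R\bar a$ iff $Rh\bar a$ iff $Rg\bar a$ because $h\in\Aut(M)$; the same argument applies to the functions and constants of $M$. Hence $g\in\Aut(M)=H$, so $H$ is $\alpha$-closed. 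The one point where the arity bound genuinely enters is that the lengths of the basic tuples of the $\alpha$-topology are exactly the arities allowed in $M$, so an approximating $h$ is available for every tuple on which preservation must be checked.

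For \eqref{closed2}$\Rightarrow$\eqref{canonical2}, take $M=(\Omega,O^H_{\bar a})_{\bar a}$. Then $H\subseteq\Aut(M)$ since each $h\in H$ maps every orbit onto itself, i.e.\ $\bar a\in O^H_{\bar b}$ iff $h\bar a\in O^H_{\bar b}$. Conversely, if $g\in\Aut(M)$, then for each admissible $\bar a$ we have $\bar a\in O^H_{\bar a}$, hence $g\bar a\in O^H_{\bar a}$, so there is $h\in H$ with $g\bar a=h\bar a$; thus $g$ is in the $\alpha$-closure of $H$, which equals $H$ by hypothesis. Therefore $\Aut(M)=H$. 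The implication \eqref{canonical2}$\Rightarrow$\eqref{structure2} is immediate, since the canonical structure is already of the required form.

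I expect no deep obstacle: the substance of the argument is identical to that of Proposition~\ref{prop:3}, and the only thing to watch is the bookkeeping of arities, making sure that the orbit relations $O^H_{\bar a}$ of the canonical structure, the admissible structures $M$ in \eqref{structure2}, and the basic open sets $G_{\bar a,\bar b}$ of the $\alpha$-topology all use the same index set of tuples, namely those of length $<\alpha$. As in the case of Proposition~\ref{prop:3}, the argument in fact shows slightly more: for an arbitrary subgroup $H$, the automorphism group of its canonical structure is precisely the $\alpha$-closure of $H$.
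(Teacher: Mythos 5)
Your proposal is correct and is essentially the paper's own argument: the paper offers no separate proof, stating only that the proof of Proposition \ref{prop:3} goes through with the canonical structure and the closure notion replaced as you describe, which is exactly the substitution you carry out (down to the closing observation that the automorphism group of the canonical structure is the $\alpha$-closure of $H$). Your bookkeeping decision to index the orbit relations, the admissible structures, and the basic open sets uniformly by tuples of length $<\alpha$ is moreover the right reading of the paper's notation $\bar a\in\Omega^\alpha$ in \eqref{canonical2}, since orbits of tuples of length exactly $\alpha$ would produce relations violating the arity bound of \eqref{structure2} and would match closure in the $(\alpha{+}1)$-topology rather than the $\alpha$-topology.
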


Krasner's result that every subgroup of $G$ is $\Aut(M)$ for some relational structure $M$ when
relations of \textit{infinite} arity are allowed (first half of Theorem \ref{thm:kras1}) follows
from Proposition \ref{prop:generalizedtopology}. The $(|\Omega|\mathord+1)$-topology is the discrete
topology, \textit{i.e.} every set is open: If $g \in G$ and $\bar a$ is an enumeration of $\Omega$
then
$$G_{\bar{a},g\bar a}=\set{g}$$ is a basic open set.
Thus every set of permutations is the union of basic open sets, and therefore open; and therefore
also closed.
Hence, if relations of arity $|\Omega|$ are allowed, every group of permutations will be the
automorphism group of some relational (infinitary) structure.

\section{Quantifiers come into play} \label{section:extending}

\subsection{Second-order relations, $\lii$ and Krasner's correspondence}
We shall now extend Krasner's correspondence to second-order operations, in order to account for
quantifier extensions, which have been the traditional focus of the debates regarding logicality and
invariance. In this subsection, we state the corresponding generalization of Theorem \ref{thm:kras1}
and prove its first part.

A finite second-order relation $Q$ of type $(i_1,...,i_k)$ on $\Omega$ is a subset of $\power(\Omega
^{i_1}) \times ... \times \power(\Omega ^{i_k})$ for finite $k$ and finite $i_1$,...,$i_k$. A
second-order structure $\Q $ on a domain $\Omega$ is a set of (finite-ary) first-order and second-order
relations on $\Omega$. A permutation $g$ on $\Omega$ preserves a second-order relation $Q$ of type
$(i_1,...,i_k)$ if $(R^{i_1},...,R^{i_k}) \in Q$ iff $(gR^{i_1},...,gR^{i_k}) \in Q$ where
$R^{i_j}$, $1 \leq j \leq k$, are first-order relations of arities $i_j$. The mappings $\Aut$ and
$\Inv$ admit a straightforward generalization to the present setting: $\Aut(\Q)$ is the group of
permutations which preserve all first-order and second-order relations in $\Q$, and $\Inv(H)$, for
$H \subseteq G$, is the set of first-order and second-order relations which are preserved by all
permutations in $H$.

Given a second-order structure $\Q$, $\lii (\Q)$ is an interpreted language in the logic $\lii$: it
is the language whose signature matches the structure $\Q$ and whose predicate and quantifier
symbols are interpreted by the relations in $\Q$. The syntax and semantics for symbols interpreted
by second-order relations is familiar from generalized quantifier theory (see \cite{Lindstrom:66}).
As a case in point, let us recall what are the intended syntactic and semantic clauses for a
second-order relation of type $(2)$: Let $Q$ in $\Q$ be of type $(2)$, $\lii (\Q)$ is then equipped
with a matching quantifier symbol $\bar{Q}$. The syntactic clause for $\bar{Q}$ has it that if
$\phi$ is a formula of $\lii (\Q)$, so is $\bar{Q}x_1x_2~\phi$. The satisfaction clause for $\bar
{Q}$ is given by

\begin{gather*} \Omega, \Q \vDash \bar{Q}x_1x_2~\phi~~[\sigma] \\
\text{iff } \\ ||\phi||^\sigma_{x_1,x_2} \in Q  \end{gather*}

\noindent where $\sigma$ is an assignment over $\Omega$ and $$||\phi||^\sigma _x = \{ (a_1,a_2) \in
\Omega ^2~|~ \Omega, \Q \vDash \phi~~[\sigma [x_1:=a_1,x_2:=a_2]]\}.$$

We shall need to consider definability in $\lii (\Q)$ for a given $\Q$. Without loss of generality,
let $Q$ be a second-order relation of type $(2)$. We say that $Q$ is definable in $\lii (\Q)$ iff
there is a sentence $\phi_Q (\bar{R})$ in $\lii (\Q)$ expanded with a binary predicate symbol
$\bar{R}$ such that
\begin{gather*}
\Omega, \Q , R \vDash \phi_Q (\bar{R}) \\
\text{iff } \\ R \in Q
\end{gather*}
\noindent where $R$ is a binary first-order relation on $\Omega$ interpreting $\bar{R}$. The
$\lii$-closure of a second-order structure $\Q$ is the set of finite first-order and second-order
relations on $\Omega$ which are definable in $\lii (\Q)$.

We can now state the generalization of Theorem \ref{thm:kras1} to second-order structures and
automorphism groups thereof:

\begin{thm}\label{thm:kras2} Let $\Omega$ be a domain and $G=S_\Omega$ the symmetric group on $\Omega$, $H
\subseteq G$ any set of
  permutations and $\Q$ a second-order structure on $\Omega$.
  \begin{enumerate}
\item $Q \in \Inv(\Aut(\Q))$ iff $Q$ is definable in $\La_{\infty\infty}(\Q)$. The same holds for
relations $R$.
    \item $\Aut(\Inv(H))$ is the smallest subgroup of $G$ including $H$.

  \end{enumerate}
\end{thm}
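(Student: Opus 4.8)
The plan is to prove the two parts separately, in each case getting one inclusion from invariance-under-definability and the other from an explicit construction in $\lii(\Q)$. For part (1), the easy direction — that every $Q$ (or $R$) definable in $\lii(\Q)$ lies in $\Inv(\Aut(\Q))$ — follows because each $g\in\Aut(\Q)$ is an isomorphism from $(\Omega,\Q,R)$ onto $(\Omega,\Q,gR)$, and isomorphisms preserve satisfaction of $\lii(\Q)$-sentences; the induction establishing the latter has a single non-routine clause, for a second-order quantifier $\bar{Q'}$, handled by noting that $g$ carries the defined relation $||\phi||^\sigma_{\bar x}$ to $||\phi||^{g\sigma}_{\bar x}$ while preserving $Q'$. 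For part (2), the easy inclusion $\langle H\rangle \subseteq \Aut(\Inv(H))$ holds because preservation of a relation is closed under composition and inverse, so the whole group generated by $H$ fixes everything in $\Inv(H)$.

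The substance of part (1) is the converse: if $Q \in \Inv(\Aut(\Q))$ then $Q$ is definable. Fixing an enumeration $\Omega = \set{a_\xi | \xi<|\Omega|}$, I would attach to each binary relation $R$ a single diagram sentence $\psi_R(\bar R)$ of the form $\exists (x_\xi)_\xi\,[\chi]$, where $\chi$ asserts that the $x_\xi$ are distinct and exhaust the domain and then records the complete atomic diagram of $(\Omega,\Q,R)$ relative to the $x_\xi$. For first-order symbols this is the conjunction of the appropriate literals; the step needing care is the second-order symbols, where for $Q'\in\Q$ of type $(2)$ one observes that over the enumerating variables every $S\subseteq\Omega^2$ is defined by a formula $\theta_S(y_1,y_2;\bar x)$, so one conjoins the literal $\bar{Q'}y_1y_2\,\theta_S$ or its negation according to whether $S\in Q'$, thereby pinning down the entire membership table of $Q'$. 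A witnessing assignment for $\psi_R$ in $(\Omega,\Q,R')$ then yields a permutation $g$ of $\Omega$ preserving every relation of $\Q$, i.e. $g\in\Aut(\Q)$, with $gR=R'$; hence $\psi_R$ defines exactly the $\Aut(\Q)$-orbit of $R$. Since invariance of $Q$ means $Q$ is a union of such orbits, $\phi_Q(\bar R)=\bigvee_{R\in Q}\psi_R(\bar R)$ defines $Q$, and the same construction handles first-order $R$.

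For the hard inclusion $\Aut(\Inv(H))\subseteq\langle H\rangle$ of part (2), the difficulty is that $\Inv(H)$ now contains only finite-arity relations, so the full-length orbit used in the infinitary first-order argument behind Theorem \ref{thm:kras1} (via Proposition \ref{prop:generalizedtopology}) is unavailable. The key device is that a \emph{second-order} relation can range over binary relations: writing $K=\langle H\rangle$, I would fix a well-ordering $R$ of $\Omega$ — a binary relation whose setwise stabilizer $\Stab(R)$ under the coordinatewise action is trivial, since an order-automorphism of a well-order is the identity — and set $Q=\set{kR | k\in K}$, a second-order relation of type $(2)$. As $K$ is a group, $Q$ is $K$-invariant and hence belongs to $\Inv(H)$. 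If $g\in\Aut(\Inv(H))$ preserves $Q$, then from $R\in Q$ we get $gR\in Q$, so $gR=kR$ for some $k\in K$, whence $k^{-1}g\in\Stab(R)=\set{e}$ and $g=k\in K$. Thus any $g$ outside $\langle H\rangle$ fails to preserve $Q\in\Inv(H)$, giving the inclusion.

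I expect the main obstacle to be the second-order bookkeeping in the diagram sentence of part (1): making precise that the complete membership table of each second-order relation is expressible as an $\lii(\Q)$ conjunction over all relations $S$ of the relevant type, and verifying that a witness forces the extracted permutation into $\Aut(\Q)$ rather than merely into the automorphism group of the first-order reduct. The rigidity trick in part (2) is the crucial new ingredient — it is precisely what makes finite-arity \emph{binary} second-order relations sufficient — but once the trivial-stabilizer property of a well-order is in hand the verification is short.
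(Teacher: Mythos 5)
Your proposal is correct, and it splits cleanly into a part that matches the paper and a part that does not. For part (1) you follow essentially the paper's own route (Lemma \ref{lemma:Krasner}): fix an enumeration of $\Omega$, record complete diagrams of all first- and second-order relations of $\Q$ relative to the enumerating variables --- with each $S\subseteq\Omega^2$ pinned down by a disjunction of equalities, which is exactly the paper's $\Delta_S(\bar T)$ --- and observe that a witnessing assignment induces a permutation that is an automorphism of $\Q$ carrying $R$ to $R'$. Your $\bigvee_{R\in Q}\exists\bar x\,\chi$ is merely the existential--disjunctive dual of the paper's universally quantified implication $\forall X\bigl((\bigwedge_{S\in\Q}\Delta_S(\bar S)\land\psi)\rightarrow\bigvee_{U\in Q}\Delta_U(\bar R)\bigr)$; both define $Q$ as a union of $\Aut(\Q)$-orbits, and your verification that the second-order literals force full preservation of each quantifier (using that a bijection of $\Omega$ induces a bijection of $\power(\Omega^2)$) is the same bookkeeping the paper does.

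For part (2) you take a genuinely different and more direct route. The paper proves that the topology on $G$ whose basic open sets are $G_{\bar R,\bar S}$ with $\bar R,\bar S$ binary is \emph{discrete} --- the proof being exactly your rigidity fact, namely that $G_{<,\,g(<)}=\set{g}$ for a well-ordering $<$ --- so every subgroup is closed, and then passes through the embedding $H\mapsto H^*$ into the symmetric group of $\power(\Omega^2)$ and Proposition \ref{prop:3} to manufacture a second-order structure whose automorphism group is $\langle H\rangle$, from which $\Aut(\Inv(H))\subseteq\langle H\rangle$ follows. You isolate the same rigidity fact but deploy it bare-handed: the single type-$(2)$ relation $Q=\set{k({<})\mid k\in\langle H\rangle}$ lies in $\Inv(H)$ because $\langle H\rangle$ is a group, and any $g$ preserving $Q$ satisfies $g({<})=k({<})$, whence $k^{-1}g$ is an automorphism of a well-order and so the identity, giving $g\in\langle H\rangle$. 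This is shorter, avoids both the topological machinery and the detour through the canonical structure on $\power(\Omega^2)$, and in fact sharpens the paper's closing remark: a single quantifier of type $(2)$ suffices, rather than a structure with quantifiers of type $(2,2,\ldots,2)$. What the paper's route buys in exchange is the uniform topological picture that organizes the whole section --- closedness in the second-order topology as the exact criterion in the monadic case, with the binary case appearing as the degenerate (discrete) instance --- which your orbit trick, being tailored to the discrete case, does not deliver.
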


We shall postpone the discussion and the proof of the second part of the theorem till the next
subsection. Regarding the first part, the proof that any relation definable in $\lii (\Q)$ is
invariant under the automorphisms of $\Q$ is a straightforward induction on the complexity of
formulas of $\lii (Q)$. What remains to be proven is the following lemma:

\begin{lemma}\label{lemma:Krasner}

Let $Q$ be a first-order or second-order relation on $\Omega$, if $Q \in \Inv(\Aut(\Q))$, then $Q$
is definable in $\lii (\Q)$.

\end{lemma}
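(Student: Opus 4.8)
The plan is to follow Krasner's strategy for the first-order case and then add a single new device to cope with the second-order relations. First I would reduce the lemma to the definability of orbits. Since $Q \in \Inv(\Aut(\Q))$ means precisely that whenever $gR = R'$ for some $g \in \Aut(\Q)$ one has $R \in Q \ekv R' \in Q$, the relation $Q$ (viewed as a subset of $\power(\Omega^2)$, taking type $(2)$ for concreteness, the general case being analogous) is a union of orbits of $\Aut(\Q)$ acting on binary relations. Hence it suffices to produce, for each binary relation $R_0$, a sentence $\phi_{R_0}(\bar R)$ of $\lii(\Q)$ such that $\Omega,\Q,R \vDash \phi_{R_0}(\bar R)$ holds exactly when $R$ lies in the orbit of $R_0$; then $\phi_Q(\bar R) = \bigvee_{R_0 \in Q}\phi_{R_0}(\bar R)$ defines $Q$, the disjunction being over a set and hence admissible in $\lii$.

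To describe the orbit of $R_0$ I would use Krasner's enumeration device. Fix a bijective enumeration $(c_\eta)_{\eta < \kappa}$ of $\Omega$, where $\kappa = |\Omega|$. The sentence $\phi_{R_0}(\bar R)$ asserts the existence of a second enumeration $(y_\eta)_{\eta<\kappa}$ --- quantified by a block of $\kappa$ existential quantifiers --- such that the induced map $g \colon c_\eta \mapsto y_\eta$ is an automorphism of $\Q$ carrying $R_0$ to the interpretation $R$ of $\bar R$. Concretely, $\phi_{R_0}$ conjoins: (i) a surjectivity clause $\forall z \bigvee_{\eta<\kappa} z = y_\eta$; (ii) the transported first-order diagram, namely for each first-order relation symbol of $\Q$ and each tuple of indices the corresponding atomic or negated atomic formula in the $y_\eta$, together with the inequalities $\neg(y_\eta = y_{\eta'})$ for $\eta \neq \eta'$ --- jointly (i) and (ii) force $g$ to be a bijection preserving the first-order part of $\Q$; and (iii) the matching clause, for all $\eta,\zeta<\kappa$ the literal $\bar R(y_\eta,y_\zeta)$ or $\neg\bar R(y_\eta,y_\zeta)$ according as $(c_\eta,c_\zeta) \in R_0$ or not, which, because the $y_\eta$ exhaust $\Omega$, says exactly that $gR_0 = R$. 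For a first-order relation $Q = R$ of finite arity the same device applies with a tuple playing the role formerly played by $R_0$ and clause (iii) dropped, recovering Krasner's original argument.

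The main obstacle is a further requirement, call it clause (iv): I must also force $g$ to preserve the second-order relations of $\Q$, and unlike the first-order diagram this is a statement quantifying over all subsets of $\Omega$, which $\lii(\Q)$ cannot do directly. The key observation is that the matching quantifier supplied by the interpreted language lets me test second-order membership of any relation definable from the enumeration variables. Thus for a second-order $Q' \in \Q$ of type $(2)$ and each $S \subseteq \Omega^2$, writing $I_S = \set{(\eta,\zeta) | (c_\eta,c_\zeta) \in S}$, the formula $\bar{Q}' x_1 x_2 \bigvee_{(\eta,\zeta)\in I_S}(x_1 = y_\eta \wedge x_2 = y_\zeta)$ holds precisely when $gS \in Q'$, since its inner disjunction defines the image $gS$. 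I would therefore add to $\phi_{R_0}$ the conjunction, over every second-order relation of $\Q$ and every such $S$, of this formula or its negation according to whether $S \in Q'$; this conjunct expresses exactly that $g$ respects $Q'$. Once (iv) is in place, $\Omega,\Q,R \vDash \phi_{R_0}(\bar R)$ iff there is $g \in \Aut(\Q)$ with $gR_0 = R$, and the reduction of the first paragraph finishes the proof. The remaining points are bookkeeping: checking that all conjunctions and quantifier blocks involved (of size up to $2^{|\Omega|}$) are legitimate in $\lii$, and that the verification ``$g$ preserves every first-order and second-order relation, hence lies in $\Aut(\Q)$'' really uses the surjectivity and inequality clauses to guarantee that $g$ is a genuine permutation.
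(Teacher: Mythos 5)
Your proposal is correct and follows essentially the paper's own argument: the same Krasner-style device of enumerating the domain by a block of $|\Omega|$ variables, writing out the complete diagram of $\Q$ --- with second-order relations described via index sets, exactly the paper's $\Delta_S(\bar T)$ built from formulas $\bar{T}y_1y_2\bigvee_{(i,j)\in K}(y_1=x_i\wedge y_2=x_j)$ --- together with a bijectivity clause, so that any assignment satisfying the diagram induces an automorphism to which the invariance of $Q$ is applied. The only difference is presentational: you quantify existentially over the second enumeration and disjoin over orbit representatives $R_0 \in Q$, whereas the paper writes the dual universally quantified implication $\forall X\,\bigl(\bigl(\bigwedge_{S\in\Q}\Delta_S(\bar S)\wedge\psi\bigr)\rightarrow\bigvee_{U\in Q}\Delta_U(\bar R)\bigr)$.
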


\begin{proof}
For the sake of simplicity, we shall give the proof for $Q$ a second-order relation of type $(2)$.
So let $Q$ be invariant under the
automorphims of $\Q$. We first construct the sentence $\phi _Q (\bar{R})$ which defines it. Let us
fix an
enumeration $I \rightarrow \Omega$ of the elements in $\Omega$ and, given a set of variables $X$ of
cardinality
$|\Omega|$, an enumeration $I \rightarrow X$ of the variables in $X$. Let $\sigma$ be a partial
assignment over
$X$ defined by $\sigma (x_i)=a_i$ for $i \in I$.

For any relation $S$ on $\Omega$ and for any symbol $\bar{T}$ matching the type of $S$, let the
description
$\Delta_S(\bar{T})$ of $S$ by $\bar{T}$ be as follows. If $S$ is a first-order relation of arity
$k$,
$\Delta_S(\bar{T})$ is the conjunction of formulas of the form $\bar{T}x_{i_1}...x_{i_k}$ or $\neg
\bar{T}x_{i_1}...x_{i_k}$ which are satisfied in $\Omega,S$ under $\sigma$. Similarly, if $S$ is a
second-order
relation, say again of type (2), $\Delta_S(\bar{T})$ is the conjunction of formulas of the form
\begin{gather*} \bar{T}y_1,y_2 \underset{(i,j) \in K}{\bigvee} (y_1=x_i \land y_2 =
x_j)\end{gather*}
or
\begin{gather*} \neg \bar{T} y_1,y_2 \underset{(i,j) \in K}{\bigvee} (y_1=x_i \land y_2 = x_j)
\end{gather*}
for $K \subseteq I \times I$, which are satisfied in $\Omega,S$ under $\sigma$. Moreover, let $\psi$
be the
following formula

\begin{gather*}\underset{i,j \in I, i \neq j} {\bigwedge} x_i \neq x_j \land \forall y \underset{i
\in I}
{\bigvee} y=x_i \end{gather*}

\noindent that is, $\psi$ says that $\sigma$ is a bijection from $X$ onto $\Omega$.

$\phi _Q$ may now be written down as
\begin{equation*} \forall X~\Bigl( \Bigl(\underset{S \in \Q}{\bigwedge} \Delta_S(\bar{S}) \land
\psi\Bigr) \rightarrow
\underset{U \in Q}{\bigvee}
\Delta_U (\bar{R})\Bigr) \end{equation*}
\noindent where $\forall X$ is short for the infinite sequence of universal quantifiers $\forall
x_1,...,\forall
x_i,...$ quantifying over every variable in $X$.

We now prove that $\Omega, \Q , R \vDash \phi_Q (\bar{R}) \text{ iff } R \in Q$. From left to right,
if $\phi_Q
(\bar{R})$ is true in $\Omega, \Q , R$, we have in particular that
\begin{equation}\label{eq11}
\bigwedge_{S \in \Q}
\Delta_S(\bar{S}) \land \psi \rightarrow \bigvee_{U \in Q}
\Delta_U(\bar{R})
\end{equation}
is satisfied under $\sigma$. But, by construction, the premise of \eqref{eq11} is satisfied under
$\sigma$, hence there is a relation $U \in Q$ such that
$\Delta_U (\bar{R})$ is satisfied under $\sigma$ when $\bar{R}$ is interpreted by $R$. This forces
$U=R$, hence
$R \in Q$ as required.

From right to left, let $R$ be a first-order relation in $Q$ and $\sigma '$ a partial assignment on
$X$. Assume that $$\Omega, \Q , R \vDash
\bigwedge_{S \in \Q} \Delta_S(\bar{S}) \land \psi~~[\sigma'].$$ We need to show that
$\Omega, R \vDash \bigvee_{U \in Q} \Delta _U(\bar{R})~~[\sigma']$.

Since $\psi$ is satisfied under $\sigma '$, $\sigma '$ is a bijection, hence there is a permutation
$f$ on
$\Omega$ such that $f \circ \sigma ' = \sigma$. Moreover, since $\bigwedge_{S \in \Q}
\Delta_S(\bar{S})$ is satisfied under $\sigma'$, $f$ is an automorphism of $\Q$, hence $f$ preserves
$Q$.

By definition, $\Omega, fR \vDash \Delta _{fR}(\bar{R})~[\sigma]$. Since $\sigma = f \circ \sigma
'$, this implies
$\Omega, R \vDash \Delta _{fR}(\bar{R})~\sigma '$. Since $f$ preserves $Q$ and $R \in Q$, we also
have $fR \in
Q$. Therefore $\Delta _{fR}(\bar{R})$ is one the disjuncts in $\bigvee_{U \in Q} \Delta_U(\bar{R})$
and the formula is satisfied under $\sigma '$ as required.
\end{proof}

In \cite{McGee:96}, McGee proved that, on a given domain, an ``operation is invariant under all
permutations if
and only if it is described by some formula of $\lii$'' (p. 572). This result follows quite directly from what we have:\footnote{McGee's operations are actually defined as functions from
sequences of sets of assignments to sets of assignments. This difference does not matter in the present
context, since McGee's operation may be translated into second-order relations and back in such a way that
invariance under sets of permutations is preserved. } Consider the
empty
structure $\emptyset$. $\Aut(\emptyset)$ is the group $G$ of all permutations. By the first part of
Theorem
\ref{thm:kras2}, $\Inv(G)$ is the $\lii$-closure of $\emptyset$, which is exactly McGee's result. It
should be
noted that the proof of Lemma \ref{lemma:Krasner} itself is a rather straightforward generalization
of Krasner's
proof in \cite{Krasner:38}. The possibility to describe every subset of $\Omega^k$ by means of
formulas of $\lii$
is sufficient to handle the extension to second-order relations.

\subsection{A topology for second-order structures}

We shall now turn to the second half of Theorem \ref{thm:kras2} and ask when a group of permutations
is the automorphism group of a structure, in a context where second-order relations are allowed but
all relations are finitary. As earlier with Proposition \ref{prop:3}, we shall base our answer on
topological properties of groups of permutations. We will see that if we allow quantifiers of type
$( 2)$ then all groups of permutations are automorphism groups of some second-order structure.
However, the case is quiet different when we only allow monadic quantifiers, i.e., quantifiers of
type $( 1,1,\ldots,1 )$.
There is a natural topology such that exactly the closed groups are the automorphism groups of
monadic second-order structures.

We say a second-order structure is monadic if all relations are relations on the power set of
$\Omega$. A monadic second-order structure on $\Omega$ is nothing else than a first-order structure
on $\Omega$ together with a relational first-order structure on the power set $\power(\Omega)$ of
$\Omega$. Let $\G$ be the symmetric group on $\power(\Omega)$. The symmetric group $G=S_\Omega$ on $\Omega$
embeds in a natural way in $\G$, by the mapping $g \mapsto g^*$ where
$$g^*A=\set{ga | a \in A}.$$
Let $\G$ be equipped with the product topology.

The second-order topology on $G$ is defined by letting the basic open sets be
$$ G_{\bar A, \bar B} = \set{g \in G | g^*A_i=B_i \text{ for all $i$}}$$ where $\bar A=A_0,\ldots,
A_n$
and $\bar B=B_0,\ldots,B_n$ are subsets of $\Omega$. As is seen directly from the definition of the
basic
open sets the second-order topology on $G$ is the topology induced by the embedding ${}^\ast: G \to
\G$.

\begin{lemma}\label{lemma}
  The set $G^*=\set{g^* | g \in G}$ is a closed subgroup of $\G$.
\end{lemma}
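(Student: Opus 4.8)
The plan is to verify the two assertions in turn: that $G^*$ is a subgroup of $\G$, and that it is closed. The subgroup part is a routine computation. The assignment $g \mapsto g^*$ is a group homomorphism from $G$ into $\G$: from $g^*A = \set{ga | a \in A}$ one reads off $(gh)^* = g^* \circ h^*$ and $(g^{-1})^* = (g^*)^{-1}$, so in particular each $g^*$ is a bijection of $\power(\Omega)$ and hence a genuine element of $\G$. Thus $G^*$, being the image of a homomorphism, is a subgroup of $\G$.

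For closedness I would argue as follows. By the description of the product topology as the topology of pointwise convergence, a permutation $\Phi \in \G$ lies in the closure of $G^*$ exactly when, for every finite tuple $A_1,\dots,A_n$ of subsets of $\Omega$, there is some $g \in G$ with $g^*A_i = \Phi(A_i)$ for all $i \leq n$. The crucial remark is that every $g^*$ is an order automorphism of the partial order $(\power(\Omega), \subseteq)$, preserving inclusion in both directions. This lets me transfer inclusion from $G^*$ to $\Phi$ two points at a time: given any $A, B \subseteq \Omega$, apply the pointwise condition to the pair $\{A,B\}$ to obtain $g$ with $g^*A = \Phi(A)$ and $g^*B = \Phi(B)$; then $A \subseteq B$ holds iff $g^*A \subseteq g^*B$ iff $\Phi(A) \subseteq \Phi(B)$. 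Since $\Phi$ is already a bijection of $\power(\Omega)$, it follows that $\Phi$ is itself an order automorphism of $(\power(\Omega), \subseteq)$.

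It then remains to recover $g$. Every order automorphism $\Phi$ of $(\power(\Omega), \subseteq)$ fixes the least element $\emptyset$ and hence maps atoms to atoms; as the atoms are precisely the singletons, setting $\Phi(\{a\}) = \{g(a)\}$ defines a map $g$ on $\Omega$, which is a permutation because $\Phi^{-1}$ is likewise an order automorphism and so also permutes the atoms. Finally, an order automorphism of a complete lattice preserves arbitrary joins, so for any $A \subseteq \Omega$ we have $\Phi(A) = \Phi\bigl(\bigcup_{a \in A} \{a\}\bigr) = \bigcup_{a \in A}\Phi(\{a\}) = \set{ga | a \in A} = g^*A$. Hence $\Phi = g^* \in G^*$, and $G^*$ is closed.

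I expect the main point to secure is the step from the pointwise closure condition to genuine preservation of inclusion: one must notice that $\subseteq$ is a \emph{binary} relation, so witnessing it requires only two coordinates at a time, and that preserving $\subseteq$ in both directions already pins $\Phi$ down as a lattice automorphism. The lattice-theoretic facts (atoms are singletons, order automorphisms of a powerset are induced by permutations of the base set) are then standard, the only care needed being the use of completeness to recover $\Phi$ on arbitrary, possibly infinite, sets from its action on singletons.
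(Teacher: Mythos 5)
Your proof is correct, and it takes a genuinely different route from the paper's. The two arguments share their skeleton: the subgroup claim is the functoriality of $g \mapsto g^*$, and the candidate permutation $g$ is in both cases read off from the action on singletons (atoms). They diverge in how they verify that a permutation $\Phi \in \G$ in the closure of $G^*$ agrees with $g^*$ on \emph{all} subsets. The paper argues by contradiction: it defines $f$ by $fa \in h\set{a}$, supposes $hA \neq f^*A$, reduces via a two-coordinate agreement on the pair $(A, A^c)$ to the case where some $a \in hA \setminus f^*A$, and then gets a contradiction from a second two-coordinate agreement on the pair $(A, \set{f^{-1}a})$, which forces $gf^{-1}a = a$ and hence $f^{-1}a \in A$. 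You instead prove a structural fact: since $\subseteq$ is a binary relation, agreement with elements of $G^*$ two coordinates at a time already forces $\Phi$ to preserve inclusion in both directions, so $\Phi$ is an automorphism of the complete lattice $(\power(\Omega), \subseteq)$; standard lattice theory (bottom and atoms are preserved, atoms are singletons, order isomorphisms preserve arbitrary joins) then yields $\Phi = g^*$ outright. The crucial move is the same in both proofs --- exploiting that the relevant relations are pinned down by finitely many coordinates of the product topology --- but your version buys a case-free, conceptual argument that also handles a point the paper glosses over: the paper simply asserts $f \in G$, whereas the bijectivity of your $g$ falls out explicitly from $\Phi$ and $\Phi^{-1}$ both permuting the atoms. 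The cost is an appeal to the lattice-theoretic characterization of automorphisms of $\power(\Omega)$, where the paper's complement trick keeps everything elementary and self-contained.
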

\begin{proof}
Since ${}^*$ respects composition $G^*$ is a group. To see that it is closed
let $h \in S_\Omega$ be in the closure of $G^*$, i.e., for all $\bar A=A_0, \ldots,
  A_n$ there is $g \in G$ such that $hA_i=g^*A_i$ for $i \leq n$. Define
  $f\in G$ by $fa \in h\set{a}$, this uniquely determines $f$ since the set $h\set{a}$
  is a singleton set: There is $g \in G$ such that $h\set{a}=g^*\set{a}=\set{ga}$.

Assume now that $h \notin G^*$, then there is $A \subseteq \Omega$ such that $hA\neq f^*A$. We may
assume that there is $a \in hA \setminus f^*A$. For otherwise $h(A^c) \setminus f^*(A^c)\neq
\emptyset$ since the complement $(f^*A)^c$ of $f^*A$ is $f^*(A^c)$ and also that $h(A^c)=(hA)^c$
since there is $g\in G$ such that $g^*A=hA$ and $g^*(A^c)=h(A^c)$ (take $\bar A = A,A^c$).

Let $g \in G$ be such that $g^*A=hA$ and $g^*\set{f^{-1}a}=h\set{f^{-1}a}$ (take $\bar A =
A,\set{f^{-1}(a)}$).
Thus $gf^{-1}a=ff^{-1}a=a$ and so $f^{-1}a \in A$ contradicting that $a \notin f^*A$.
\end{proof}

From this lemma it follows that a set $A \subseteq G$ is closed in the second-order topology iff
$A^*$ is
closed in $\G$.

By using Proposition \ref{prop:3} and considering a second-order monadic  structure on the domain $\Omega$ as a first-order structure on $\power(\Omega)$, we can prove the characterization of automorphism
groups of monadic
second-order structures.

\begin{prop}
   Let $H$ be a subgroup of $G$. The following are equivalent:
  \begin{enumerate}
\item\label{sec_1} There is a second order monadic structure $\Q$ with domain $\Omega$ whose
automorphism group is $H$.
    \item\label{sec_2} $H$ is the automorphism group of the second-order structure
			$$\mathbb M = (\Omega,O^{H^*}_{\bar R})_{\bar R \in (\power(\Omega))^k }.$$
    \item\label{sec_3} $H$ is closed in the second-order topology.
  \end{enumerate}
\end{prop}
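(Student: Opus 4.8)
The plan is to reduce everything to Proposition~\ref{prop:3} applied on the domain $\power(\Omega)$ instead of $\Omega$, using the embedding ${}^*\colon G \to \G$ and Lemma~\ref{lemma}. The central observation is that a monadic second-order structure $\Q$ on $\Omega$ is literally a first-order relational structure $\Q'$ on $\power(\Omega)$ (its relations are subsets of $\power(\Omega)^k$, and any first-order part on $\Omega$ is incorporated through singletons), and that a permutation $g \in G$ is an automorphism of $\Q$ exactly when the induced permutation $g^*$ of $\power(\Omega)$ is an automorphism of $\Q'$. Hence $\Aut_\Omega(\Q) = \set{g \in G | g^* \in \Aut_{\power(\Omega)}(\Q')}$, and since ${}^*$ is injective this reads $(\Aut_\Omega(\Q))^* = G^* \cap \Aut_{\power(\Omega)}(\Q')$. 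I will also use the remark following Lemma~\ref{lemma}, that $A \subseteq G$ is closed in the second-order topology iff $A^*$ is closed in $\G$, together with the fact that ${}^*$ is a group homomorphism, so that $H^*$ is a subgroup of $\G$ whenever $H$ is a subgroup of $G$.

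With these observations the three implications follow the pattern of Proposition~\ref{prop:3}. The implication \eqref{sec_2}$\Rightarrow$\eqref{sec_1} is immediate, since $\mathbb M$ is itself a monadic second-order structure. For \eqref{sec_1}$\Rightarrow$\eqref{sec_3}, given $H=\Aut_\Omega(\Q)$, view $\Q$ as the first-order structure $\Q'$ on $\power(\Omega)$: by the implication \eqref{structure}$\Rightarrow$\eqref{closed} of Proposition~\ref{prop:3} applied to $\G$, the group $\Aut_{\power(\Omega)}(\Q')$ is closed in $\G$, and $G^*$ is closed in $\G$ by Lemma~\ref{lemma}, so $H^* = G^* \cap \Aut_{\power(\Omega)}(\Q')$ is closed in $\G$; by the remark after Lemma~\ref{lemma}, $H$ is closed in the second-order topology. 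For \eqref{sec_3}$\Rightarrow$\eqref{sec_2}, note that, read on $\power(\Omega)$, the structure $\mathbb M$ is exactly the canonical structure $(\power(\Omega), O^{H^*}_{\bar R})_{\bar R \in \power(\Omega)^k}$ of the subgroup $H^* \le \G$. Since $H$ is closed in the second-order topology, $H^*$ is closed in $\G$, so by the implication \eqref{closed}$\Rightarrow$\eqref{canonical} of Proposition~\ref{prop:3} applied to $\G$ and $H^*$, the automorphism group of $\mathbb M$ as a first-order structure on $\power(\Omega)$ is $H^*$. Translating back through ${}^*$: $g \in \Aut_\Omega(\mathbb M)$ iff $g^* \in \Aut_{\power(\Omega)}(\mathbb M) = H^*$ iff $g \in H$, whence $\Aut_\Omega(\mathbb M)=H$.

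The only place that needs real care is the bookkeeping of the translation between the two domains: that the automorphism condition for the monadic structure on $\Omega$ is faithfully captured by the action of $g^*$ on $\power(\Omega)$, and that $\Aut_{\power(\Omega)}(\mathbb M)$ contains no permutations of $\power(\Omega)$ lying outside $G^*$. The latter point is settled by the remark following Proposition~\ref{prop:3}: the automorphism group of the canonical structure of $H^*$ equals the closure $\overline{H^*}$, which is contained in the closed group $G^*$ by Lemma~\ref{lemma}; thus every automorphism of $\mathbb M$ is genuinely of the form $g^*$ and the correspondence between $H$ and $H^*$ is exact. Once this is checked, the argument is a direct transcription of Proposition~\ref{prop:3} through the embedding ${}^*$.
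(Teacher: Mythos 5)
Your proof is correct and follows essentially the same route as the paper's: both reduce the statement to Proposition \ref{prop:3} applied on the domain $\power(\Omega)$ via the embedding ${}^*$, using Lemma \ref{lemma} (closedness of $G^*$ in $\G$) and the identity $H^*=G^*\cap\Aut_{\power(\Omega)}(\Q')$, which appears verbatim in the paper's proof. The only (harmless, arguably slightly cleaner) difference is bookkeeping: the paper keeps the first-order part $M$ of $\Q$ separate, concluding via $\Aut(\Q)=\Aut(M)\cap\Aut(\Q')$ with both factors closed, whereas you absorb $M$ into the first-order structure on $\power(\Omega)$ through singletons, so that a single application of Proposition \ref{prop:3} on $\power(\Omega)$ suffices.
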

\begin{proof}
The implication \ref{sec_2} $\Rightarrow$ \ref{sec_1} is trivial.

To prove that \ref{sec_1} $\Rightarrow$ \ref{sec_3} let $\Q$ be a second-order structure with
$\Aut(\Q)=H$, let $M$ be the first-order part of $\Q$ and $\Q'$ the second-order part. By
Proposition \ref{prop:3} $\Aut(M) \subseteq G$ is closed. Now, $\Q'$ can be seen as a
\emph{first-order}
structure on the domain $\power(\Omega)$, and by applying Proposition \ref{prop:3} to this structure
on
$\power(\Omega)$ we get that $\Aut_{\power(\Omega)}(\Q') \subseteq \G$ is closed.\footnote{
$\Aut_{\power(\Omega)}(\Q')$ is the group of all permutations of $\power(\Omega)$ respecting the
quantifiers in $\Q'$. Compare this to $\Aut_{\Omega}(\Q')$, which is the set of all permutations of $\Omega$
respecting the quantifiers in $\Q'$.}
By the comment after Lemma \ref{lemma} $\Aut_{\Omega}(\Q') \subseteq G$ is closed since
$$\Aut_{\Omega}(\Q')^* = G^* \cap \Aut_{\power(\Omega)}(\Q'),$$
and both of these subgroups are closed. To conclude the argument it is enough to observe that
$\Aut(\Q) = \Aut(M) \cap \Aut(\Q')$ and that both of these groups are closed.

For the last implication \ref{sec_3} $\Rightarrow$ \ref{sec_2} it is enough to observe that $H^*
\subseteq
\G$ is closed and thus, by Proposition \ref{prop:3}, $H^*$ is the automorphism group of
$$(\power(\Omega),O^{H^*}_{\bar R})_{\bar R \in (\power(\Omega))^k }$$
	and that $g \in G$ respects $Q$ iff $g^* \in \G$ respects
$Q$, as a first-order quantifier on $\power (\Omega)$. Thus,
$H$ is the automorphism group of $\mathbb M$.
\end{proof}

To conclude the discussion of automorphism groups of monadic second-order structures we prove that
not all
subgroups of $G$ are closed in the second-order topology, implying that not all subgroups are
automorphism
groups of such structures which proves that the second part of Theorem \ref{thm:kras2} can not hold
if we
restrict $\Inv(\cdot)$ to only include monadic quantifiers. The following example of non-closed
subgroup comes
from Stoller \cite{Stoller:63}.

Assume that $\Omega$ is countable and
 $\leq$ well-orders $\Omega$. Say that a
permutation $p$ of $\Omega$ is a piece-wise order isomorphism if there are
partitions $(A_i)_{i\leq k}$ and $(B_i)_{i\leq k}$ of  $\Omega$ such that
$p$ restricted to $A_i$ is an order isomorphism from $A_i$ to $B_i$.
Let $H$ be the set of all such piece-wise order isomorphisms.

\begin{lemma}[{\cite{Stoller:63}}]
  $H$ is a proper dense subgroup of $G$.
\end{lemma}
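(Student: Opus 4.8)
The plan is to establish the three assertions separately. For the density argument I will take $\leq$ to have order type $\omega$ (any countably infinite $\Omega$ carries such a well-order, and this is all the example requires, while it is precisely what makes density hold in the second-order topology relevant to the subsequent conclusion). That $H$ is a subgroup is routine: the identity is witnessed by the trivial one-block partition; if $p\in H$ is witnessed by $(A_i)_{i\leq k}$ and $(B_i)_{i\leq k}$ then $p^{-1}$ is witnessed by $(B_i)_{i\leq k}$ and $(A_i)_{i\leq k}$, since the inverse of an order isomorphism is one; and for $q\circ p$, with $q$ witnessed by pieces $(C_l)_{l\leq n}$, I would pass to the refinement with pieces $A_i\cap p^{-1}(C_l)$, on each of which $p$ is an order isomorphism onto a subset of $C_l$ and $q$ an order isomorphism on $C_l$, so $q\circ p$ is an order isomorphism there --- and there are at most $(k+1)(n+1)$ such pieces.

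Next, properness. I would produce a single permutation outside $H$. Writing $\Omega=\{c_0<c_1<\cdots\}$, split it into consecutive finite blocks $D_0,D_1,\dots$ with $|D_n|=n+1$, and let $p$ map each $D_n$ onto itself reversing its order. Then $p$ reverses the order of any two elements lying in a common block. Consequently, if $p$ were witnessed by pieces $(A_i)_{i\leq k}$, each $A_i$ could meet each block $D_n$ in at most one point, for otherwise $p$ would not be order-preserving on $A_i$; but then the $k+1$ pieces could not cover $D_{k+1}$, which has $k+2$ elements. Hence $p\notin H$ and $H$ is proper.

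Finally, density in the second-order topology. A basic neighbourhood of an arbitrary $g\in G$ is $G_{\bar A,\bar B}$ with $\bar A=A_1,\dots,A_n$ and $B_i=g^*A_i$, so it suffices to find $h\in H$ with $h^*A_i=g^*A_i$ for all $i$. I would let $C_1,\dots,C_m$ be the atoms of the finite Boolean algebra generated by $A_1,\dots,A_n$; these partition $\Omega$, and their images $g^*C_1,\dots,g^*C_m$ partition $\Omega$ as well. Each $C_j$ is equinumerous with $g^*C_j$, and since $\leq$ has order type $\omega$ any two equinumerous subsets of $\Omega$ are order-isomorphic; so I may pick an order isomorphism $h_j\colon C_j\to g^*C_j$. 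Gluing the $h_j$ yields a permutation $h$ that is piece-wise order isomorphic via $(C_j)_{j\leq m}$ and $(g^*C_j)_{j\leq m}$, hence $h\in H$; and since each $A_i$ is a union of atoms, $h^*A_i=g^*A_i$. Thus $H$ meets every basic neighbourhood of $g$, so $H$ is dense, and being proper it is not closed.

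I expect the density step to be the crux: the difficulty is to agree with $g$ on entire subsets while keeping the number of pieces finite. Reducing to the finitely many atoms of the generated Boolean algebra localises the problem, and the order-type-$\omega$ homogeneity --- every two equinumerous subsets being order-isomorphic --- lets each atom be matched by a single piece. This homogeneity is essential: for a well-order of type $\omega^2$ an atom of type $\omega$ can have a $g$-image of type $\omega^2$, which by indecomposability no finite piece-wise order isomorphism can reach, so density in the second-order topology would fail.
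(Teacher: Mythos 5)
Your proof is correct and takes essentially the same route as the paper: your atoms $C_j$ are exactly the paper's sets $A_N$, your block-reversing permutation plays the same role as the paper's $g(m)=(n+1)^2-(m+1-n^2)$ (both reverse arbitrarily long finite intervals and are defeated by pigeonhole), and density is obtained in both cases by matching each atom with a single order-isomorphic piece. You merely make explicit what the paper compresses---the subgroup verification, the order-type-$\omega$ homogeneity behind its ``clearly there is an $h\in H$,'' and the correct observation that the paper's ``for simplicity'' assumption of the usual ordering of $\mathbb{N}$ is substantive for the density step.
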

\begin{proof}
For simplicity assume that $\Omega=\mathbb N$ and that the ordering $\leq$
coincides with the usual ordering of the natural numbers.

It is rather straight-forward to check that $H$ is a subgroup. That it is
proper follows from the following argument. Let $g(m)$ be
$(n + 1)^2 - (m +1-n^2)$ where $n^2\leq m<(n+ 1)^2$. The function $g$ is defined so that it
reverses arbitrarily long sequences of natural numbers: $g(k^2+a) = (k+1)^2 -a -1$ for $0 \leq a
\leq
2k$ which means that $g$ reverses the
sequence $k^2,k^2+1,\ldots, k^2+2k$ of length $2k+1$.
Assume that there are partitions $(A_i)_{i\leq k}$ and $(B_i)_{i\leq k}$ of $\mathbb N$ into $k$
blocks
such that $g$ restricted to $A_i$ is an order isomorphism from $A_i$ to $B_i$. By the pigeonhole
principle two elements of the
sequence $k^2,k^2+1,\ldots, k^2+2k$ of $2k+1$ is in the same block $A_i$. That contradicts the
assumption that $g$ is an order isomorphism from $A_i$ to $B_i$.

To see that $H$ is not closed take some $g \in G$. We prove that for any $\bar A$ there is
  $h \in H$ such that $g^*$ and $h^*$ agrees on $\bar A$, showing that $g$ is in
  the closure of $H$. Let the image of $\bar A=A_0,\ldots A_k$ under $g$ be $\bar
  B=B_0,\ldots,B_k$. For any subset $N$ of $[k]=\set{0,1,\ldots,k}$ let
  $$A_N=\bigcap
  \set{A_i | i \in N} \cap \bigcap \set{A_i^c | i \notin N},$$ and define
  $B_N$ similarily.
Then, for any $N \subseteq [k]$, $g$ maps $A_N$ to $B_N$; and also the $A_N$s are pairwise disjoint.  Clearly there is an $h \in H$ mapping the $A_N$s to the $B_N$s. It is now easy to see
  that $h$ also maps the $A_i$s to the $B_i$s. Thus, the closure of $H$ is $G$ and $H$ is
  dense in $G$.
\end{proof}

Let us now prove the second half of Theorem \ref{thm:kras2} by observing that the generalization of
the
second-order topology on $G$ where we regard the permuations of $G$ as permuting binary relations on $\Omega$ is the discrete topology and thus that any subgroup is closed in this topology:

\begin{prop}
  The topology on $G$ given by the basic open sets
$$ G_{\bar R, \bar S} = \set{g \in G | g^*(R_i)=S_i}$$ where $\bar R=R_0,\ldots,
R_n$
and $\bar S=S_0,\ldots,S_n$ are subsets of $\Omega^2$ is the discrete topology.
\end{prop}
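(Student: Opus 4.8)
The plan is to show that every singleton $\set{g}$ is open, which immediately yields discreteness. For this it suffices to produce, for each $g \in G$, a single binary relation $R$ on $\Omega$ such that the basic open set $G_{R,\,g^*R}$ contains nothing but $g$. The natural choice is to take $R$ to be a well-ordering of $\Omega$, exploiting the fact that well-orders are rigid.

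Concretely, fix a well-ordering $\leq$ of $\Omega$ (available by the axiom of choice, which is already in force since the paper speaks of $|\Omega|$ throughout), regarded as a subset of $\Omega^2$. For a given $g \in G$, put $S=g^*(\leq)$, so that trivially $g \in G_{\leq,\,S}$. I would then check that this basic open set is exactly $\set{g}$: if $h \in G_{\leq,\,S}$ then $h^*(\leq)=S=g^*(\leq)$, and since $g^*h^*=(gh)^*$ the permutation $h^{-1}g$ satisfies $(h^{-1}g)^*(\leq)=\leq$; that is, $h^{-1}g$ is an order-automorphism of $(\Omega,\leq)$.

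The one substantive ingredient is the rigidity of well-orders: the only order-automorphism of a well-ordered set is the identity. This is the standard transfinite-induction fact---were $h^{-1}g$ to move some point, one considers the $\leq$-least such point and derives a contradiction. Hence $h^{-1}g$ is the identity, so $h=g$, and $G_{\leq,\,g^*\leq}=\set{g}$ is a basic open set. Every singleton being open, the topology is discrete.

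I do not expect a genuine obstacle here; the only thing to get right is the conceptual point that separates this case from the monadic one. In the monadic (unary) second-order topology of the previous subsection a permutation acts on subsets of $\Omega$, and such data cannot in general pin a permutation down---this is exactly what the Stoller example shows. A single binary relation, by contrast, can carry a rigid structure, namely a well-order, so that recording its image under $g^*$ already determines $g$ completely, in the same spirit in which the $(|\Omega|\mathord+1)$-topology was seen to be discrete by encoding an enumeration of $\Omega$.
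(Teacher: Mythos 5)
Your proposal is correct and follows essentially the same route as the paper: the paper also fixes a well-ordering $<$ of $\Omega$ and shows that $G_{<,\,g^*(<)}=\set{g}$, proving $h=g$ by transfinite induction on $x$ over $<$, which is exactly the rigidity-of-well-orders fact you invoke (your repackaging via the order-automorphism $h^{-1}g$ is only a cosmetic difference). Nothing is missing.
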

\begin{proof}
  Let $<$ well-order $\Omega$ and let $g \in G$ be some permutation of $\Omega$.
  Let $R=\set{g(<)}$ be the image of $<$ under $g$. Then the basic open set
  $G_{<, R}$ is the singleton set $\set{g}$: If $h(<) = R$ and $h \in G$ then by transfinite
  induction on $x$ over $<$ we see that $h(x)=g(x)$ for every $x \in \Omega$ and thus that $h=g$.
\end{proof}

Let now $H$ be any subgroup of $G$, by the proposition $H$ is closed in this topology. By an
argument
similar to the case of monadic second-order structures $H^*$ is a closed subgroup of the symmetric
group
of $\power(\Omega^2)$. Thus by Proposition \ref{prop:3} there is a relational structure on
$\power(\Omega^2)$
whose automorphism group is $H^*$. This structure is a second-order structure on $\Omega$ having $H$
as its
automorphism group. Thus, we may conclude that any subgroup of $G$ is the automorphism group of a
second-order structure with quantifiers of type $(2,2\ldots,2)$; proving something slightly stronger
than
the second half of Theorem \ref{thm:kras2}.

\section{Invariance under similarities} \label{section:similarities}

In the present section, we propose an extension of Krasner's correspondence to languages without equality. Our motivations for doing so are twofold. First, we aim at greater generality: we shall show that the correspondence in Theorem \ref{thm:kras2} between groups of bijections and sets of first-order and second-order operations closed under definability is a special case of an even more encompassing correspondence. Second, invariance criteria in which functions are not required to be one-to-one have recently been proposed, see Feferman \cite{Feferman:99} and Casanovas \cite{Casanovas:07}:\footnote{Casanovas shows that several non-equivalent ways of defining of invariance appear when functions are allowed that are not one-one. We are concerned here with the what he calls `Feferman invariance', as defined by Feferman in \cite{Feferman:99} and that we take to be the most natural notion of invariance in that context. See \cite{Casanovas:07} for a detailed examination of the other options, as well as for characterization results for these other options.} the question is how much the situation does, or does not, change when invariance is thus liberalized.

The generalized correspondence is to hold between monoids of similarities and sets of operations closed under definability in \liim . A similarity $\pi$ on a domain $\Omega$ is simply a binary relation $\pi \subseteq \Omega \times \Omega$, such that for all $a \in \Omega$ there is a $b \in \Omega$ with $a \mathrel \pi b$ and for all $b \in \Omega$ there is $a \in \Omega$ such that $a \mathrel \pi b$. \liim~is the equality-free version of \lii . For a given signature, its formulas are all the formulas of \lii~that do not contain the equality symbol. In their recent model-theoretic studies of equality-free languages, Casanovas, Dellunde and Jansana have shown that structure preserving similarities play for first-order logic without equality the same role that isomorphisms play for first-order logic with equality, see \cite{Casanovas:96}.\footnote{In \cite{Casanovas:96}, Casanovas \& alii use the term `relativeness correspondence', whereas Feferman in \cite{Feferman:99} goes for `similarity relation'. Note also that, instead of similarity relations, one may work with surjective functions. Any relational composition of surjective maps or inverses of surjective maps is a similarity relation, and any similarity relation can be written as the
composition $f \circ g^{-1}$, where $f: C \to B$ and $g: C \to A$ are onto, see \cite{Casanovas:96} for more details.} The present extension of Krasner's correspondence pushes the analogy further.

Our extension comes with a twist. In \lii , on a fixed domain and with parameters, all subsets of the domain are definable. This is not the case in \liim . Generalized quantifiers may add expressive power that remains ineffective even when parameters are allowed. As a consequence, the proof method used in the proof of Theorem \ref{thm:kras2} breaks down. It relied on describing `from below' the action of invariant quantifiers by describing elementwise the sets and relations they apply to. Such a strategy is no longer available.

To see this, let us first define invariance under similarity. A relation $R \subseteq \Omega^k$ is invariant under the similarity relation $\pi$ on $\Omega$ if for all $\bar a \mathrel\pi \bar b$ we have $\bar a \in R$ iff $\bar b \in R$. Given a quantifier $Q$ over the domain $\Omega$ we say that $Q$ is invariant under $\pi$ if for all relations $R_1,\ldots, R_k, S_1,\ldots,S_k$ on $\Omega$ such that $R_i \mathrel\pi S_i$ we have $\langle R_1,\ldots R_k, \rangle \in Q$ iff $\langle S_1,\ldots, S_k \rangle \in Q$. Consider now, as an extreme case, the operations $\Q=\{\top,\bot,Q^{\text{E}} \}$, where $\top$ and $\bot$ are the two trivial unary relations\footnote{It is useful to assume that there are symbols in the language to express these, in order to guarantee that $\La_{\infty\infty}^-(\Q)$ is not empty.} defined by $\top=\Omega$, $\bot=\emptyset$ and $Q^{\text{E}}=\{ \{0,2,4,6,8,10,... \} \}$. One may show that $Q^{\text{O}}=\{ \{1,3,5,7,9,11,... \} \}$ is invariant under all similarities under which operations in $q$ are invariant. But in order to describe from below the action of $Q^{\text{O}}$, we would need to be able to define the set of odd numbers. As can easily be proven by induction, this is not possible in a language in which $\top$ and $\bot$ are the only relation symbols, even in the presence of $Q^{\text{E}}$.

Our alternative strategy will consist in showing that Krasner's correspondence holds \textit{when the extra expressive power is discarded}. More precisely, we will show that the desired definability and closure properties hold when attention is limited to the action of generalized quantifiers on definable sets and relations. Note however that even though $Q^{\text{O}}$ is not definable from below in $\liim (q)$, it is indeed definable full stop, simply by the formula $Q^{\text{E}}x~\neg Px$. Therefore, the question whether a version of Krasner's correspondence without a restriction to definable relations holds is still open.

Given a set of operations $q$ and a quantifier $Q$, let $Q^{\upharpoonright q}$ be the restriction of $Q$ to relations definable in $\lii (q)$ (with parameters):
$$Q^{\upharpoonright \Q} =\set{\langle R_1 \ldots R_k \rangle \in Q | R_i \text{ is definable in } \liim(q,a)_{a \in \Omega},~1\leq i \leq k }$$
Let $\Q^\upharpoonright$ be all the relations in $\Q$ and all the restrictions $Q^{\upharpoonright \Q}$ of quantifiers in $\Q$. It is easy to see that $\liim (\Q)$ and $\liim(\Q ^\upharpoonright)$ are elementary equivalent. In that sense, restricting quantifiers to their definable parts does not change the \textit{effective} expressive power of the logic.

Restriction to definable parts on the side of quantifiers will correspond to a similar restriction on the side of the invariance condition. Invariance will only be required with respect to relations that respect the underlying equivalence relation generated by the operations or the similarities under scrutiny. A set of operations $\Q$ gives us an equivalence relation $\sim_{\Q}$
corresponding to definability with parameters in $\La^-_{\infty,\infty}(\Q)$, that is $a \sim_{\Q}
b$ iff
\begin{equation}\label{eq1}
	\bigwedge_{\phi \in \La_{\infty\infty}^-(\Q)} \forall \bar
x (\phi(a,\bar x) \leftrightarrow \phi(b,\bar x)).
\end{equation}

Similarly, a set of similarities $\Pi$ generates an equivalence relation by the following condition:
$$ a \approx_\Pi b \text{ iff for all } \bar c\in \Omega^k \text{ there exists } \pi \in \Pi \text{
such that } a,\bar c \mathrel \pi b,\bar c, \text{ for a finite  } k.$$

Invariance is now parametrized by the equivalence relations $\sim$ we are considering.
Given a similarity $\pi$, a quantifier $Q$ is $\sim$-invariant under $\pi$ if for any
$\bar R,\bar S \in \power(\Omega^{k_1}) \times \ldots \times \power(\Omega^{k_l})$, all invariant under $\sim$, if
$\bar R \mathrel\pi \bar S$
then $ \bar R \in Q$ iff $ \bar S \in Q$. As before, a relation is $\sim$-invariant under $\pi$ if it is
invariant under $\pi$. This yields the main definitions of this section:
\begin{defin}\
\begin{itemize}
\item Let $\Inv(\Pi)$ be the set of all relations $R$ and quantifiers $Q$ on $\Omega$ which are
$\approx_\Pi$-invariant under all similarities in $\Pi$.
\item $\Sim(\Q)$ is the set of similarities $\pi$ such that all relations and quantifiers
in $\Q$ are $\sim_\Q$-invariant under $\pi$.
\end{itemize}
\end{defin}

Observe that when $\Pi$ is a group $H$ of permutations then $\Inv(\Pi)=\Inv(H)$, where $\Inv(H)$
uses the old definition of $\Inv$. This follows directly from the fact that $\approx_\Pi$ in this
case is equality.

In general, $\Sim(\Q)$ is not a group, because the similarities need not be invertible. The relevant closure properties for $\Sim(\Q)$ are the following. A set $\Pi$ of similarities is a \emph{monoid with involution} if it is closed under composition and taking converses.\footnote{The converse $R^{-1}$ of $R$ is such that $a\mathrel{R^{-1}} b $
iff $b \mathrel R a$.}. Moreover, $\Pi$ is \emph{full} if it is a monoid with involution, $\mathord\approx_\Pi \in \Pi$, and
closed under taking subsimilarities, i.e., such that if $\pi \in \Pi$ and $\pi' \subseteq \pi$ is a similarity then $\pi' \in \Pi$.

We are now ready to state Krasner's correspondence for the equality-free case, with the aforementioned restriction to the definable parts of quantifiers:

\begin{thm}\label{cor} Let $\Q$ be a set of operations and $\Pi$ a set of similarities, then
\begin{enumerate}
\item $Q \in \Inv(\Sim(\Q))$ iff $Q^{\upharpoonright q}$ is definable in
$\La_{\infty\infty}^-(\Q)$, and $R \in \Inv(\Sim(\Q))$ iff $R$ is definable in $\La_{\infty\infty}^-(\Q)$.
\item $\Sim(\Inv(\Pi))$ is the smallest full monoid including $\Pi$.
\end{enumerate}
\end{thm}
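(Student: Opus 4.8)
The plan is to reduce both parts to the permutation case already settled in Theorem~\ref{thm:kras2}, by factoring everything through the quotient of $\Omega$ by the indistinguishability relation. The linchpin is a preliminary lemma identifying the two ways of generating an equivalence relation: for every set of operations $\Q$ one has $\sim_\Q=\approx_{\Sim(\Q)}$, and for every set of similarities $\Pi$ one has $\approx_\Pi=\sim_{\Inv(\Pi)}$. The right-to-left inclusions are immediate from the definitions; for the others one checks that if $a\not\sim_\Q b$ then the distinguishing formula witnessing this yields a similarity in $\Sim(\Q)$ separating $a$ and $b$, and dually. Writing $\Omega^\ast=\Omega/{\sim_\Q}$ (resp.\ $\Omega/{\approx_\Pi}$), the key feature of the quotient is that distinct classes are distinguishable in $\liim$ with parameters, so that every singleton $\{c\}\subseteq\Omega^\ast$ is definable without equality (as the conjunction of distinguishing formulas), and hence \emph{every} subset of $\Omega^\ast$ is $\liim$-definable. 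Thus on the quotient the ``describe from below'' method of Lemma~\ref{lemma:Krasner}, which fails on $\Omega$ itself, becomes available again; the role of the restriction $Q^{\upharpoonright q}$ is precisely to discard the surplus expressive power so that quantifiers are seen only through their action on these definable (equivalently, $\sim_\Q$-invariant) arguments.

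For part~(1), the direction from definability to invariance is a routine induction on $\liim(\Q)$-formulas: relations and quantifiers of $\Q$ are $\sim_\Q$-invariant under each $\pi\in\Sim(\Q)$ by definition of $\Sim$, and this property is preserved by the infinitary connectives and quantifiers, the restriction to $\sim_\Q$-invariant arguments being exactly what the quantifier clause of $\sim$-invariance requires. For the converse I would pass to $\Omega^\ast$. The $\sim_\Q$-invariant relations are exactly the relations on $\Omega^\ast$, the operations of $\Q$ descend (through their definable parts) to operations $\Q^\ast$ on $\Omega^\ast$, and, using the preliminary lemma, $\Sim(\Q)$ descends to the permutation group $\Aut(\Q^\ast)$. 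Theorem~\ref{thm:kras2}(1) applied on $\Omega^\ast$ then gives that $Q^\ast\in\Inv(\Aut(\Q^\ast))$ iff $Q^\ast$ is definable in $\lii(\Q^\ast)$; translating back---equality on $\Omega^\ast$ being simulated by the equality-free definitions of singletons noted above---yields that $Q^{\upharpoonright q}$ is definable in $\liim(\Q)$ iff $Q\in\Inv(\Sim(\Q))$, and likewise for relations $R$ (here no restriction is needed, since an invariant relation respects $\sim_\Q$ and so descends to a subset of $\Omega^\ast$, which is always definable).

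For part~(2), I would first reduce to the case where $\Pi$ is full: replacing $\Pi$ by the smallest full monoid $\langle\Pi\rangle$ it generates leaves $\Inv(\Pi)$ unchanged (respecting a relation is inherited under composition, converse, passage to subsimilarities, and by $\approx_\Pi$), hence by the preliminary lemma also leaves $\approx_\Pi$ unchanged. For a full monoid the descent to $\Omega^\ast=\Omega/{\approx_\Pi}$ sends each $\pi\in\Pi$ to a \emph{permutation} $\pi^\ast$ of $\Omega^\ast$: single-valuedness is the crux, and it follows because, given $a\approx_\Pi a'$, $a\mathrel\pi b$, $a'\mathrel\pi b'$ and any tuple $\bar d$, one chooses $\bar u$ with $\bar u\mathrel\pi\bar d$ and a witness $\tau\in\Pi$ of $a\approx_\Pi a'$ fixing $\bar u$; then the composite $\pi^{-1}\mathbin\circ\tau\mathbin\circ\pi\in\Pi$ relates $b$ to $b'$ while fixing $\bar d$, so $b\approx_\Pi b'$. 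Under this descent $\Inv(\Pi)$ corresponds to $\Inv(\Pi^\ast)$; moreover any $\pi\in\Sim(\Inv(\Pi))$ likewise descends to a permutation, since $\pi$ respects every relation of $\Inv(\Pi)$ and, by the preliminary lemma, these separate $\approx_\Pi$-classes. Hence $\Sim(\Inv(\Pi))$ descends into $\Aut(\Inv(\Pi^\ast))$, which Theorem~\ref{thm:kras2}(2) identifies with the smallest group $\langle\Pi^\ast\rangle$ containing $\Pi^\ast$. Pulling back, such a $\pi$ induces the same permutation as some $\pi_0\in\langle\Pi\rangle$, so $\pi\subseteq\pi_0$, and closure under subsimilarities gives $\pi\in\langle\Pi\rangle$. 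The reverse inclusion $\langle\Pi\rangle\subseteq\Sim(\Inv(\Pi))$ amounts to checking that $\Sim(\Inv(\Pi))$ is a full monoid containing $\Pi$, which is immediate from the definitions.

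The main obstacle, and the place where the equality-free setting genuinely differs from Krasner's, is making the quotient translation faithful in both directions: on the one hand proving the preliminary lemma $\approx_\Pi=\sim_{\Inv(\Pi)}$ (so that enough invariant relations exist to force the descent to be a permutation and to separate classes), and on the other hand verifying that full infinitary definability on $\Omega^\ast$ is recovered equality-free on $\Omega$ once one restricts to $\sim_\Q$-invariant relations. Both hinge on the definability of singletons of classes without equality; handling this cleanly in the presence of parameters and second-order arguments is where the bulk of the work lies.
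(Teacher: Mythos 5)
Your proposal follows essentially the same route as the paper's own proof: the preliminary identities $\sim_\Q \,=\, \approx_{\Sim(\Q)}$ and $\approx_\Pi \,=\, \sim_{\Inv(\Pi)}$ (Proposition \ref{allisgood}), the descent of similarities to permutations of the quotient $\Omega/\mathord\sim$ (Lemmas \ref{bijective} and \ref{bijective2}), the correspondence $\Sim(\Q)/\mathord\sim \,=\, \Aut(\Q/\mathord\sim)$ with $\Inv(\Sim(\Q))$ computed through $\Inv(\Aut(\Q/\mathord\sim))$ (Proposition \ref{prop:aut}), and finally an application of Theorem \ref{thm:kras2} on the quotient with equality translated back as the definable relation $\sim$ --- including your reduction of part (2) to full monoids, which is the paper's reduction made slightly more explicit. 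The one spot needing repair is cosmetic: from $\pi/\mathord\approx = \pi_0/\mathord\approx$ you cannot infer $\pi \subseteq \pi_0$ for an arbitrary such $\pi_0$, but only $\pi \subseteq \mathord\approx \mathrel\circ \pi_0 \mathrel\circ \mathord\approx$, which is exactly how the paper concludes, using $\mathord\approx_\Pi \in \Pi$ together with closure under composition and subsimilarities.
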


As a test case let $\Q$ include equality. Then $\sim_\Q$ is equality and $\Sim(\Q) = \Aut(\Q)$.
Theorem \ref{cor} says that a quantifier $Q$ is in $\Inv(\Aut(\Q))$ iff it is definable in
$\La_{\infty\infty}^-(\Q)$ which is the same as being definable in $\La_{\infty\infty}(\Q)$. Thus, part
one of Theorem \ref{thm:kras2} follows easily from Theorem \ref{cor}.
Also part two follows since if $H$ is a subgroup of the symmetric group of $\Omega$ then $\approx_H$
is nothing but equality and $\Aut(\Inv(H))$ is just $\Sim(\Inv(H))$ which is $H$ by (2) of Theorem
\ref{cor}.

In order to prove  Krasner's correspondence for definable relations, we shall need some technical machinery. We define two mappings: $\cdot \mathord/ \mathord\sim$ and $\cup$ which will operate on several different domains, but in a similar way.
Let $\sim$ be an equivalence relation on $\Omega$, $[a]=\set{b | a \sim
b}$ be the equivalence class of $a$, and $\Omega /\sim$ the set of all equivalence classes. Given $R \subseteq \Omega^k$
we define the relation $R/\mathord\sim$ on $\Omega/\mathord\sim$ by $$R/\mathord\sim =\set{\langle
[a_1], \ldots, [a_k] \rangle \in (\Omega/\mathord\sim)^k| \langle
a_1, \ldots, a_k \rangle \in R}.$$ If $R \subseteq
(\Omega/\mathord\sim)^k$ then $$\cup R = \set{  \langle
a_1, \ldots, a_k \rangle \in \Omega^k| \langle [a_1], \ldots, [a_k] \rangle \in
R}.$$ Similarily, given a quantifier $Q$ on $\Omega$, the quantifier $Q/\mathord\sim$ on
$\Omega/\mathord\sim$ is defined by
$$Q/\mathord\sim = \set{\langle R_1,\ldots,R_k\rangle | \langle \cup R_1,\ldots, \cup R_k\rangle \in
Q},$$ and given a quantifier $Q$ on $\Omega /\mathord\sim$, the quantifier $\cup Q$ on $\Omega$ is
defined by
$$\cup Q = \set{ \langle \cup R_1,\ldots,\cup R_k\rangle | \langle R_1,\ldots,R_k\rangle \in Q}.$$
Thus, given a set $\Q$ of operations on $\Omega$ and an equivalence relation $\sim$ on
$\Omega$, we get a set $\Q/\mathord\sim$ of operations on $\Omega/\mathord\sim$. Similarly
we can go from a set of operations on $\Omega /\mathord\sim$ to a set of operations on $\Omega$ by
the operation $\cup$.

\def\liimqpara{\liim(q,a)_{a \in \Omega}}

Since there are at most $2^{2^{|\Omega|}}$ many non-equivalent formulas with $|\Omega|$ free
variables the conjuction in \eqref{eq1} above can be bounded by that number, and thus \eqref{eq1}
is a formula of $\liim(q)$. We now get the following easy lemma.

\begin{lemma}\label{oneisenough}
For any $a$ and $b$ in $\Omega$, $a \sim_\Q b$ iff
$$\bigwedge_{\phi \in \La_{\infty\infty}^-(\Q)} \forall
x (\phi(a,x) \leftrightarrow \phi(b,x)), $$
where the conjuntion is only over $\phi$s with two free variables.
\end{lemma}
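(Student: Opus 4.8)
The plan is to prove the two directions separately, noting that one is immediate and that the other is nothing more than a one-line application of the remark stated just before the lemma. Throughout I use that, by that remark, the conjunction in \eqref{eq1} ranges (up to logical equivalence) over a \emph{set} of formulas, so that \eqref{eq1}, read as a condition on its two free positions, \emph{is} a single formula $\rho(y,z)$ of $\liim(\Q)$ with exactly two free variables, satisfying $\Omega\vDash\rho(a,b)$ iff $a\sim_\Q b$.

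For the forward direction there is nothing to do: if $a\sim_\Q b$, then by definition the full conjunction \eqref{eq1} over all $\phi\in\liim(\Q)$ holds, and discarding all conjuncts coming from formulas with more than two free variables leaves exactly the displayed conjunction (for a two-variable formula the block $\forall\bar x$ is a single $\forall x$). Hence the restricted right-hand side follows trivially.

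The reverse direction is where the defining formula $\rho$ does the work. Assume the restricted right-hand side holds, i.e.\ $\forall x\,(\phi(a,x)\leftrightarrow\phi(b,x))$ for \emph{every} two-variable $\phi\in\liim(\Q)$. Since $\rho$ is itself such a two-variable formula, it is among the admissible test formulas, so $\forall x\,(\rho(a,x)\leftrightarrow\rho(b,x))$ holds; this already says that $a$ and $b$ lie in the same $\sim_\Q$-class. Concretely, I would instantiate $x:=a$ to get $\rho(a,a)\leftrightarrow\rho(b,a)$, use reflexivity of $\sim_\Q$ (so $\rho(a,a)$ holds) to conclude $\rho(b,a)$, i.e.\ $b\sim_\Q a$, and then appeal to symmetry of $\sim_\Q$ to obtain $a\sim_\Q b$, as required.

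I do not expect any genuine obstacle; the proof is easy precisely because the only idea needed is to recognise that the formula defining the a priori \emph{stronger} equivalence $\sim_\Q$ is already one of the permitted two-variable test formulas, so that a single instantiation collapses the two-variable condition back onto $\sim_\Q$ itself. The only points requiring a word of care are that $\rho$ is a bona fide two-variable formula of $\liim(\Q)$ — which is exactly the content of the set-boundedness remark preceding the lemma — and that $\sim_\Q$ is reflexive and symmetric, which holds because it is an equivalence relation.
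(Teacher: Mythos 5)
Your proposal is correct and is essentially the paper's own proof: the paper likewise uses the cardinality bound to treat $x \sim_\Q y$ as a bona fide two-variable formula of $\La_{\infty\infty}^-(\Q)$, includes it among the conjuncts, instantiates the universally quantified variable at $a$, and concludes from reflexivity of $\sim_\Q$ (your explicit symmetry step is merely left implicit there). No gaps; nothing further needed.
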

\begin{proof}
Follows directly by setting $\phi(x,y)$ to be the formula $x \sim_\Q y$ and $x$ to be $a$. Then we
have
$$ a \sim_\Q a \leftrightarrow a \sim_\Q b,$$
and thus that $a \sim_\Q b$.
\end{proof}

\begin{prop}\label{respect}
Let $\Q$ be a set of operations on $\Omega$.
\begin{enumerate}
\item  $R \subseteq \Omega^k$ is definable in $\liimqpara$ iff $R$ is invariant under $\sim_{\Q}$.
\item $Q^{\upharpoonright \Q}=\cup (Q/\sim_\Q)$
\end{enumerate}
\end{prop}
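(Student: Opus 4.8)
The plan is to reduce everything to the single fact, contained in part~(1), that for relations definability in $\liimqpara$ coincides with invariance under $\sim_\Q$; part~(2) will then drop out by unwinding the definitions of $\cdot\mathord/\mathord\sim_\Q$ and $\cup$. The preliminary observation I would record is that $\sim_\Q$ is itself definable by a parameter-free formula of $\liim(\Q)$: by the cardinality bound noted just before Lemma~\ref{oneisenough} the conjunction in~\eqref{eq1} is a genuine formula of $\liim(\Q)$, and by Lemma~\ref{oneisenough} two free variables already suffice, so there is a formula $\theta(x,y)$ of $\liim(\Q)$ with $\Omega \vDash \theta(a,b)$ exactly when $a \sim_\Q b$. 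Consequently every class $[a]$ is defined by $\theta(a,x)$, a formula of $\liimqpara$ with the single parameter $a$.

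For part~(1), the direction ``definable $\Rightarrow$ invariant'' I would prove by changing coordinates one at a time. Suppose $\phi(x_1,\ldots,x_k)$ defines $R$ in $\liimqpara$ and $a_i \sim_\Q b_i$ for every $i$; it suffices to show that replacing a single $a_i$ by $b_i$, with the other coordinates frozen as parameters, preserves membership. Freezing the other coordinates turns $\phi$ into a one-variable formula $\psi(z)$ of $\liimqpara$, and abstracting all of its parameters into fresh variables exhibits it as an instance $\chi(z,\bar c)$ of a parameter-free $\chi$; the clause of the definition of $\sim_\Q$ instantiated at $\bar x = \bar c$ then gives $\psi(a_i) \leftrightarrow \psi(b_i)$. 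Telescoping over the $k$ coordinates yields $\bar a \in R$ iff $\bar b \in R$. For the converse, if $R$ is invariant under $\sim_\Q$ then $R = \bigcup_{\bar a \in R}\,([a_1] \times \cdots \times [a_k])$, a set-indexed union since $\Omega$ has at most $|\Omega|$ classes, and the formula $\bigvee_{\bar a \in R} \bigwedge_{i=1}^{k} \theta(a_i, x_i)$ is a legitimate formula of $\liimqpara$ whose extension is, by invariance, exactly $R$.

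Part~(2) I would then obtain by unwinding the definitions. A relation $S$ lies in the range of $\cup$, that is $S = \cup T$ for some $T$ on $\Omega/\mathord\sim_\Q$, precisely when $S$ is $\sim_\Q$-invariant: $\cup T$ always respects classes, and conversely any $\sim_\Q$-invariant $S$ equals $\cup(S/\mathord\sim_\Q)$. Reading off the definitions of $Q/\mathord\sim_\Q$ and of $\cup$, the set $\cup(Q/\mathord\sim_\Q)$ is exactly the collection of tuples $\langle S_1,\ldots,S_k\rangle \in Q$ each of whose entries is $\sim_\Q$-invariant. By part~(1), entrywise invariance is the same as definability of each $S_i$ in $\liimqpara$, so $\cup(Q/\mathord\sim_\Q) = Q^{\upharpoonright \Q}$, as claimed.

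The main obstacle is the ``invariant $\Rightarrow$ definable'' half of part~(1): everything rests on $\sim_\Q$ being definable by one formula of $\liim(\Q)$, so that the classes become parameter-definable, and on the disjunction over class-products being set-sized rather than proper-class-sized---both secured by the cardinality bound and Lemma~\ref{oneisenough}. Once these are in place the remaining work is the short union-of-classes computation above. The converse direction presents no real difficulty beyond the bookkeeping required to see that the parameter-free definition of $\sim_\Q$ already captures indistinguishability with named parameters.
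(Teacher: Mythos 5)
Your proof is correct and takes essentially the same route as the paper's: your defining formula $\bigvee_{\bar a \in R}\bigwedge_{i}\theta(a_i,x_i)$ is exactly the paper's $\bigvee_{\bar a \in R}\bigwedge_{i} a_i \sim_\Q x_i$ (legitimized by the same cardinality bound and Lemma \ref{oneisenough}), and your part (2) is the paper's unwinding via the observation that definability in $\liim(\Q,a)_{a\in\Omega}$ coincides with lying in the range of $\cup$, i.e., with $R=\cup(R/\mathord\sim_\Q)$. Your coordinate-by-coordinate, parameter-abstraction argument for the ``definable $\Rightarrow$ invariant'' direction simply spells out what the paper dismisses as immediate.
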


\begin{proof}
(1). The left to right implication is immediate. For the other implication assume $R$ is invariant under
$\sim_{\Q}$.
Let $\phi_R (x_1,...,x_n)$ be the formula
$$\bigvee_{\langle a_1,...,a_n \rangle \in R} \ \bigwedge_{1 \leq i \leq n} a_i \sim_q x_i$$
defining $R$. If $\bar{a} \in R$, $\Omega , \Q \vDash \phi (\bar{a})$ since $\sim_q$ is reflexive.
If $\Omega , \Q \vDash \phi (\bar{a})$, there is $\bar{b} \in R$ such that
$\bar a \sim_q \bar b$.
Implying that $\bar a \in
R$ since $R$ is invariant under $\sim_{\Q}$.

(2). By (1)  $R \subseteq \Omega^k$ is definable in $\liimqpara$ iff it is of the form $\cup R'$, where $R' \subseteq (\Omega/\sim_q)^k$. This holds iff $R=\cup (R/\sim_q)$.

Assume that $\bar R \in Q^{\upharpoonright \Q}$, then $R_i$ is definable in $\liimqpara$.
Thus $R_i = \cup (R_i/\sim_q)$, proving that $\bar R \in \cup (Q/\sim_q)$. On the other hand if $\bar R \in \cup (Q/\sim_q)$ then each $R_i$ satisfies $R_i=\cup (R_i/\sim_q)$ and thus they are all definable in $\liimqpara$. From the definition it also follows that $\cup (Q/\sim_q) \subseteq Q$ and thus $\bar R \in Q$.
 \end{proof}

\begin{lemma}\label{bijective}
Let $\Q$ be a set of operators and suppose $\pi \in \Sim(\Q)$. Then
\begin{enumerate}
\item $\pi/\mathord\sim_\Q$ is a permutation of $\Omega/\mathord\sim_\Q$,
\item if $R$ is invariant under $\sim_\Q$ then $R \mathrel\pi S$, where $S=\pi(R)$, and
\item if $\bar a, \bar b \in \Omega^\alpha$, $\bar a \mathrel \pi \bar b$ and $\phi(\bar x)$ is a formula in
$\La_{\infty\infty}^-(\Q)$ then
$\Omega, \Q \models \phi(\bar a) \leftrightarrow \phi(\bar b)$.
\end{enumerate}
\end{lemma}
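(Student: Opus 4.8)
The plan is to prove the three clauses in the order (3), (1), (2): the preservation statement (3) is the engine, clause (1) follows from it, and clause (2) follows from (1). Throughout, $\bar a\mathrel\pi\bar b$ is read componentwise. For (3) I would argue by induction on the complexity of $\phi\in\liim(\Q)$. For atomic $\phi$ the claim $\phi(\bar a)\leftrightarrow\phi(\bar b)$ is exactly the statement that the first-order relations of $\Q$ are invariant under $\pi$, which is part of the hypothesis $\pi\in\Sim(\Q)$ (there being no equality atoms, as the language is equality-free); negation and infinitary conjunctions and disjunctions are immediate from the induction hypothesis. For a first-order quantifier block $\exists\bar y\,\psi$ I would use that $\pi$ is a similarity: a witness $\bar c$ for $\bar a$ yields, by left-totality of $\pi$, a tuple $\bar d$ with $\bar c\mathrel\pi\bar d$, so that $(\bar a,\bar c)\mathrel\pi(\bar b,\bar d)$ and the induction hypothesis supplies a witness for $\bar b$; the converse direction is symmetric, using right-totality.

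The delicate step is the generalized-quantifier case, and it is here that the restriction built into $\Sim(\Q)$ does its work. Take $\phi=\bar Q y_1y_2\,\psi$ with $Q\in\Q$ of type $(2)$ (higher types being analogous), and put $R=||\psi||^{\bar a}$ and $S=||\psi||^{\bar b}$. Applying the induction hypothesis to $\psi$ along $(\bar a,\bar c)\mathrel\pi(\bar b,\bar d)$ for arbitrary $\bar c\mathrel\pi\bar d$ shows $R\mathrel\pi S$. Crucially, $R$ and $S$ are definable in $\liim(\Q)$ with parameters $\bar a$ and $\bar b$ respectively, so by Proposition \ref{respect}(1) they are invariant under $\sim_\Q$; the $\sim_\Q$-invariance of $Q$ under $\pi$ (which is precisely what $\pi\in\Sim(\Q)$ furnishes for the quantifiers of $\Q$) therefore applies and gives $R\in Q$ iff $S\in Q$, that is $\phi(\bar a)\leftrightarrow\phi(\bar b)$. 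I expect the main obstacle to be exactly this point: one must check that the extensions $||\psi||^{\bar a}$ produced along the induction always lie among the $\sim_\Q$-invariant relations on which quantifier invariance is assumed. This is what turns the otherwise too weak invariance hypothesis into something sufficient, and it is the technical payoff of restricting attention to definable relations.

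For (1) I would read off functionality and injectivity of $\pi/\mathord\sim_\Q$ from (3). For functionality, suppose $a\mathrel\pi b$ and $a\mathrel\pi b'$; to conclude $b\sim_\Q b'$ it suffices, by Lemma \ref{oneisenough}, to verify $\phi(b,c)\leftrightarrow\phi(b',c)$ for every two-variable $\phi\in\liim(\Q)$ and every $c\in\Omega$. Using right-totality choose $c_0\mathrel\pi c$; then $(a,c_0)\mathrel\pi(b,c)$ and $(a,c_0)\mathrel\pi(b',c)$, so (3) yields $\phi(a,c_0)\leftrightarrow\phi(b,c)$ and $\phi(a,c_0)\leftrightarrow\phi(b',c)$, whence the desired equivalence. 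Injectivity is symmetric, using left-totality to pick a $\pi$-successor of $c$. That $\pi/\mathord\sim_\Q$ is defined on all of $\Omega/\mathord\sim_\Q$ and is surjective follows directly from the left- and right-totality of the similarity $\pi$. Together these four facts say that $\pi/\mathord\sim_\Q$ is a permutation.

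Finally, for (2), with $S=\pi(R)$ the relational image of $R$ under $\pi$, the forward half of $R\mathrel\pi S$ is immediate: if $\bar c\in R$ and $\bar c\mathrel\pi\bar d$ then $\bar d\in S$ by definition of the image. For the converse, suppose $\bar c\mathrel\pi\bar d$ with $\bar d\in S$, and pick $\bar c'\in R$ with $\bar c'\mathrel\pi\bar d$. Injectivity of $\pi/\mathord\sim_\Q$, just established in (1), gives $\bar c\sim_\Q\bar c'$ componentwise, and since $R$ is invariant under $\sim_\Q$ this transfers $\bar c'\in R$ to $\bar c\in R$, completing $R\mathrel\pi S$.
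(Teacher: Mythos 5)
Your proposal is correct in substance but organizes the lemma differently from the paper, and the difference is worth noting. The paper runs \emph{two} separate inductions: clause (1) is proved first by its own induction on formulas, showing that if $a\sim_\Q a'$, $a\mathrel\pi b$ and $a'\mathrel\pi b'$, then $b$ and $b'$ satisfy the same formulas with arbitrary parameters --- and in that induction the generalized-quantifier step needs no invariance of $Q$ at all, since the induction hypothesis makes the two defined extensions literally identical sets. Clause (2) is then derived from the injectivity half of (1), and clause (3) is a \emph{second} induction whose quantifier case invokes (2): it takes $S=\pi(R)$, gets $R\mathrel\pi S$ and $S\in Q$ from the definition of $\Sim(\Q)$, and then identifies $\pi(R)$ with $\llbracket\phi(\bar b,\bar x)\rrbracket_{\bar x}^\Omega$ (a step the paper glosses). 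You invert the dependency: a single induction establishes (3), where in the quantifier case you verify $R\mathrel\pi S$ for $R=\llbracket\psi(\bar a,\bar x)\rrbracket_{\bar x}^\Omega$ and $S=\llbracket\psi(\bar b,\bar x)\rrbracket_{\bar x}^\Omega$ directly from the induction hypothesis, and appeal to Proposition \ref{respect}(1) for the $\sim_\Q$-invariance of these parametrically definable sets --- correctly isolating the point where the restriction built into $\Sim(\Q)$ earns its keep, which is also the crux of the paper's version. Clauses (1) and (2) then fall out as corollaries. This buys economy (one induction instead of two, and no separate identification of $\pi(R)$ with the extension at $\bar b$), and there is no circularity, since your (3) uses neither (1) nor (2).

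One small under-quantification in your clause (1): functionality of $\pi/\mathord\sim_\Q$ on equivalence classes requires the implication from $a\sim_\Q a'$, $a\mathrel\pi b$, $a'\mathrel\pi b'$ to $b\sim_\Q b'$ (and dually for injectivity), whereas you only treat the case $a=a'$. Your own method closes this in one line: with $c_0\mathrel\pi c$ chosen as you did, clause (3) gives $\phi(b,c)\leftrightarrow\phi(a,c_0)$ and $\phi(b',c)\leftrightarrow\phi(a',c_0)$, and the definition of $\sim_\Q$ supplies the missing middle link $\phi(a,c_0)\leftrightarrow\phi(a',c_0)$. Note that the special case you did prove (common $\pi$-image) is exactly what your clause (2) consumes, so nothing downstream is affected.
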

\begin{proof}
Let $\sim$ be $\sim_\Q$.
	
	(1) To prove that $\pi/\mathord\sim$ is a function we prove that for all $a,a' \in \Omega $ and all
$b,b' \in \Omega$ such that $a \mathrel\sim a'$, $a \mathrel\pi b$ and $a' \mathrel\pi b'$ we have $b
\mathrel\sim b'$. This is proved by induction on the formulas in $\La^-_{\infty\infty}(\Q)$. In fact
we prove, by induction, the slightly stronger statement that if $\psi(x,\bar c)$ is a formula of
$\La^-_{\infty\infty}(\Q,\Omega)$ (observe that $\bar c$ can be an infinite string) then $\Omega,
\Q
\models \phi(b,\bar c)$ iff $\Omega, \Q \models \phi(b',\bar c)$.

For the base case take $R \in \Q$ and suppose $R(b,\bar c)$. Let $\bar d$ be such that $\bar d
\mathrel\pi \bar c$, then $R(a,\bar d)$ since $a,\bar d \mathrel\pi b,\bar c$. Now $R(a',\bar d)$
since $a \sim a'$ and then we have $R(b',\bar c)$ since $a',\bar d \mathrel\pi b',\bar c$.

For the induction steps negation, disjunction and existential quantification are
easy. Let us thus
take $Q \in \Q$ and suppose $\Omega, \Q \models Q\bar x\phi(b,\bar c,\bar x)$. Then $\llbracket \phi(b,\bar
c,\bar x)\rrbracket_{\bar x}^\Omega \in Q$ and
by the induction hypothesis $\llbracket \phi(b',\bar c,\bar x)\rrbracket_{\bar x}^\Omega
=\llbracket\phi(b,\bar c,\bar x)\rrbracket_{\bar x}^\Omega $ and thus $\Omega, \Q \models Q\bar x
\phi(b',\bar c,\bar x)$.

The proof of the bijective property is very similar and left to the reader.

(2) From (1) we have that if $\pi \in\Sim(\Q)$  and $R$ respects
$\sim$ then $R \mathrel\pi S$ iff $S=\pi(R)$. This is because if $\bar a \mathrel\pi \bar b$ and
$\bar b \in \pi(R)$ then there is $\bar a' \in R$ such that $\bar a' \mathrel \pi \bar b$ and so by
(1) $\bar a \sim \bar a'$ and since $R$ respects $\sim$, $\bar a \in R$.

(3) The proof is by induction on $\phi$. For the base case we have $\bar a \mathrel\pi \bar b$,
where $\pi \in \Sim(\Q)$,
and $R \in \Q$. Thus $R$ is invariant under $\pi$ and $R\bar a$ iff $R\bar b$.

For the induction step negation and (infinite) conjuntion is trivial. For the case with (an infinite
string of) existential quantifers assume that $\Omega,\Q \models
\exists \bar x \phi(\bar a, \bar x)$ is true and thus
there are $\bar c$ such that $\Omega, \Q \models \phi(\bar a,\bar c)$. Since $\pi$ is a similarity relation there are
$\bar d$ such that $\bar a,\bar c \mathrel\pi \bar b,\bar d$ and thus by the induction hypothesis we
have $\Omega, \Q \models \phi(\bar b,\bar d)$ and thus $\Omega, \Q \models \exists \bar x \phi(\bar b,\bar x)$.

We are left with the case of a generalized quantifier $Q \in \Q$. Suppose $\Omega, \Q \models
Q\bar x \phi(\bar a,\bar
x)$, i.e., that $R = \llbracket \phi(\bar a, \bar x)\rrbracket_{\bar x}^\Omega \in Q$.
Since $\pi \in \Sim(\Q)$, $R$ is invariant under
$\sim$ and by (2) $R \mathrel\pi S$ where $S = \pi(R)$, we have
$S \in Q$. Now, $\pi(R)=S$ is $\llbracket\phi(\bar b, \bar x)\rrbracket_{\bar x}^\Omega$ and thus
$\Omega, \Q \models Q\bar x \phi(\bar b,\bar x)$.
\end{proof}

%

Next we prove that $\approx_\Pi$ satisfies similar properties:

\begin{lemma}\label{bijective2}
Let $\Pi$ be a monoid of similarites with involution and $\pi \in \Pi$. Then
\begin{enumerate}
\item $\pi/\mathord\approx_\Pi$ is a permutation of $\Omega/\mathord\approx_\Pi$,
\item if $R$ is invariant under $\approx$ then $R \mathrel\pi S$, where $S=\pi(R)$, and
\item if also $\mathord\approx_\Pi \in \Pi$, $\bar a, \bar b \in \Omega^\alpha$,  $\bar a \mathrel \pi \bar b$ and $\phi(\bar x)$ is a formula in
$\La_{\infty\infty}^-(\Inv(\Pi))$ then
$\Omega, \Inv(\Pi) \models \phi(\bar a) \leftrightarrow \phi(\bar b)$.
\end{enumerate}
\end{lemma}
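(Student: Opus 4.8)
The plan is to establish the three parts in order, exploiting the close parallel with Lemma~\ref{bijective}. The one structural difference that drives the whole proof is that $\approx_\Pi$ is defined \emph{directly} from the similarities in $\Pi$, rather than through definability as $\sim_\Q$ was. Consequently part~(1) no longer requires an induction on formulas (as the corresponding statement did in Lemma~\ref{bijective}); it becomes a purely combinatorial computation resting on the closure of $\Pi$ as a monoid with involution together with the left- and right-totality built into the notion of a similarity. Parts~(2) and~(3) then transcribe the arguments of Lemma~\ref{bijective}, the only genuinely new ingredient being the extra hypothesis $\mathord\approx_\Pi\in\Pi$, which is used exactly once but essentially.

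For part~(1) I would first check that $\pi/\mathord\approx_\Pi$ is well defined, i.e. that $a\approx_\Pi a'$, $a\mathrel\pi b$ and $a'\mathrel\pi b'$ force $b\approx_\Pi b'$. Given a finite tuple $\bar c$, right-totality of $\pi$ produces $\bar e$ with $\bar e\mathrel\pi\bar c$, and $a\approx_\Pi a'$ supplies $\tau\in\Pi$ with $a,\bar e\mathrel\tau a',\bar e$. Composing $\pi^{-1}$, $\tau$ and $\pi$ yields $\rho\in\Pi$ (monoid with involution) realising the chains $b\mathrel{\pi^{-1}}a\mathrel\tau a'\mathrel\pi b'$ and $c_i\mathrel{\pi^{-1}}e_i\mathrel\tau e_i\mathrel\pi c_i$, so that $b,\bar c\mathrel\rho b',\bar c$ and hence $b\approx_\Pi b'$; here one also notes, routinely, that composites and converses of similarities are again similarities. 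Totality and surjectivity of $\pi/\mathord\approx_\Pi$ follow from left- and right-totality of $\pi$, and injectivity is just the well-definedness argument applied to $\pi^{-1}\in\Pi$.

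Part~(2) I would prove exactly as Lemma~\ref{bijective}(2). To show $R\mathrel\pi S$ with $S=\pi(R)$, the forward direction is the definition of the image; for the converse, if $\bar a\mathrel\pi\bar b$ and $\bar b\in S=\pi(R)$, pick $\bar a'\in R$ with $\bar a'\mathrel\pi\bar b$, use the injectivity half of part~(1) to obtain $\bar a\approx_\Pi\bar a'$, and conclude $\bar a\in R$ since $R$ is $\approx_\Pi$-invariant.

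Part~(3) goes by induction on $\phi\in\La_{\infty\infty}^-(\Inv(\Pi))$. The base case is immediate, since a relation in $\Inv(\Pi)$ is by definition invariant under $\pi$; negation, infinite conjunction and the infinitary existential quantifier are handled as in Lemma~\ref{bijective}(3), using left-totality of $\pi$ to match witnesses. The crux, which I expect to be the main obstacle, is the generalized-quantifier case $Q\bar x\,\phi(\bar a,\bar x)$. Writing $R=\llbracket\phi(\bar a,\bar x)\rrbracket^\Omega_{\bar x}$, to invoke the $\approx_\Pi$-invariance of $Q$ I must first know that $R$ itself is $\approx_\Pi$-invariant; this is \emph{not} automatic here (contrast Proposition~\ref{respect}(1) in the $\sim_\Q$ case), and it is precisely the point where the hypothesis $\mathord\approx_\Pi\in\Pi$ is needed: if $\bar c\approx_\Pi\bar c'$ then $(\bar a,\bar c)\mathrel{\approx_\Pi}(\bar a,\bar c')$ by reflexivity on $\bar a$, and applying the induction hypothesis for $\phi$ with the similarity $\mathord\approx_\Pi\in\Pi$ gives $\phi(\bar a,\bar c)\leftrightarrow\phi(\bar a,\bar c')$, so $R$ is $\approx_\Pi$-invariant. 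Part~(2) then yields $R\mathrel\pi S$ with $S=\pi(R)$, where $S$ is again $\approx_\Pi$-invariant by the bijectivity of part~(1); the $\approx_\Pi$-invariance of $Q$ under $\pi$ gives $S\in Q$; and finally the induction hypothesis for $\phi$ shows $\pi(R)=\llbracket\phi(\bar b,\bar x)\rrbracket^\Omega_{\bar x}$ (using right-totality of $\pi$ for the inclusion $\supseteq$), whence $\Omega,\Inv(\Pi)\models Q\bar x\,\phi(\bar b,\bar x)$.
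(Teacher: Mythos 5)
Your proposal is correct and follows essentially the same route as the paper's proof: part (1) from closure of $\Pi$ under composition and converses, part (2) via the injectivity half of (1), and part (3) by induction on $\phi$, with the hypothesis $\mathord\approx_\Pi \in \Pi$ used exactly where the paper uses it, namely to show that $R=\llbracket\phi(\bar a,\bar x)\rrbracket^\Omega_{\bar x}$ is $\approx_\Pi$-invariant in the generalized-quantifier case. You merely make explicit some details the paper leaves to the reader, such as the composite $\pi\circ\tau\circ\pi^{\smallsmile}$ in (1), the $\approx_\Pi$-invariance of $S=\pi(R)$, and the use of right-totality for the inclusion $\pi(R)\supseteq\llbracket\phi(\bar b,\bar x)\rrbracket^\Omega_{\bar x}$.
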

\begin{proof} Let $\approx$ be $\approx_\Pi$.

	(1) To prove that $\pi/\mathord\approx$ is a function we prove that for all $a,a' \in \Omega $ and
all $b,b' \in \Omega$ such that $a \mathrel\approx a'$, $a \mathrel\pi b$ and $a \mathrel\pi b'$ we
have $b \mathrel\approx b'$.
This follows directly from $\Pi$ being closed under composition and
taking inverses.
The proof of the bijective property is similar and left to the reader.

(2) If $\bar a \mathrel\pi \bar b$ and $\bar b \in \pi(R)$ then there is $\bar a' \in R$ such that
$\bar a' \mathrel \pi \bar b$ and so by (1) $\bar a \approx \bar a'$ and since $R$ is invariant
under $\approx$, $\bar a \in R$.

(3) The proof is by induction on $\phi$. For the base case we have $\bar a \mathrel\pi \bar b$,
where $\pi \in \Pi$,
and $R \in \Inv(\Pi)$. Thus $R$ is invariant under $\pi$ and $R\bar a$ iff $R\bar b$.

For the induction step negation and (infinite) conjuntion is trivial. For the case with (an infinite
string of) existential quantifiers assume that $\Omega, \Inv(\Pi) \models
\exists \bar x \phi(\bar a, \bar x)$ is true and
thus there are $\bar c$ such that $\Omega, \Inv(\Pi) \models \phi(\bar a,\bar c)$. Since $\pi$ is a similarity relation there
are $\bar d$ such that $\bar a,\bar c \mathrel\pi \bar b,\bar d$ and thus by the induction
hypothesis we have $\Omega, \Inv(\Pi) \models \phi(\bar b,\bar d)$ and $\Omega, \Inv(Pi)\models
\exists \bar x \phi(\bar b,\bar x)$.

We are left with the case of a generalized quantifier $Q \in \Q$. Suppose $Q\bar x \phi(\bar a,\bar
x)$, i.e., that $R = \llbracket\phi(\bar a,
\bar x)\rrbracket_{\bar x}^\Omega \in Q$.
If $R$ is invariant under $\approx$ we are finished since then by (2) $R
\mathrel\pi S$ where $S = \pi(R)$, we have
$S \in Q$. Now, $\pi(R)=S$ is $\llbracket\phi(\bar b, \bar x)\rrbracket_{\bar x}^\Omega$ and thus
$\Omega, \Inv(\Pi)\models Q\bar x \phi(\bar b,\bar x)$. To
see that $R$ is invariant under $\approx$ take $\bar c \in R$ and $\bar d$ such that $\bar c \approx
\bar d$. There is $\pi \in \Pi$ such that $\bar c \mathrel\pi \bar d$ and then since $\mathord\approx_\Pi
\in \Pi$, $\pi' = \pi \circ \mathord\approx \in \Pi$ and
$\bar a,\bar c \mathrel{\pi'} \bar a,\bar d$, and thus $\Omega, \Inv(\Pi) \models
\phi(\bar a,\bar c)$ implies that $\Omega, \Inv(\Pi)\models \phi(\bar
a,\bar d)$ by the induction hypothesis and we have $\bar d \in R$.
\end{proof}

In fact $\approx_\Pi$ and $\sim_\Q$ are very much related as the next proposition shows.

\begin{prop}\label{allisgood} Let $\Q$ be a set of operators and $\Pi$ a monoid with involution,
then
\begin{enumerate}
\item if  $\mathord\approx_\Pi \in \Pi$, then $\approx_\Pi$ is the same relation as $\sim_{\Inv (\Pi)}$, and
\item $\sim_\Q$ is the same as $\approx_{\Sim(\Q)}$.
\end{enumerate}
\end{prop}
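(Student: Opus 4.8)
The plan is to prove each of the two claimed identities of equivalence relations by establishing the two inclusions separately. In both parts the ``easy'' inclusion runs from the $\approx$-side to the $\sim$-side and is handled by the transfer lemmas (Lemma~\ref{bijective}(3) and Lemma~\ref{bijective2}(3)) together with Lemma~\ref{oneisenough}, which lets me restrict attention to formulas with two free variables. The ``hard'' inclusion runs the other way and in each case amounts to producing a witnessing similarity; I obtain it by showing that the relevant equivalence relation is itself one of the objects on which it is allowed to act.

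For part (1) assume $\approx_\Pi \in \Pi$. First I would prove $a \approx_\Pi b \Rightarrow a \sim_{\Inv(\Pi)} b$. By Lemma~\ref{oneisenough} it suffices to check, for every two-variable formula $\phi(x,y)$ of $\La_{\infty\infty}^-(\Inv(\Pi))$ and every $c \in \Omega$, that $\phi(a,c) \leftrightarrow \phi(b,c)$. Since $a \approx_\Pi b$, applied to the singleton tuple $c$ there is $\pi \in \Pi$ with $(a,c) \mathrel\pi (b,c)$; as $\approx_\Pi \in \Pi$ and $\Pi$ is a monoid with involution, Lemma~\ref{bijective2}(3) gives exactly $\phi(a,c) \leftrightarrow \phi(b,c)$. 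For the converse $a \sim_{\Inv(\Pi)} b \Rightarrow a \approx_\Pi b$, the key step is to verify that the binary relation $\approx_\Pi$ lies in $\Inv(\Pi)$, i.e.\ is invariant under every $\pi \in \Pi$. Given $\pi \in \Pi$ with $(a,b) \mathrel\pi (a',b')$ and assuming $a \approx_\Pi b$, I would show $a' \approx_\Pi b'$ thus: for arbitrary $\bar d$ pick $\bar c$ with $\bar c \mathrel\pi \bar d$ (possible since $\pi$ is a similarity) and $\rho \in \Pi$ with $(a,\bar c) \mathrel\rho (b,\bar c)$; then the chain $(a',\bar d) \mathrel{\pi^{-1}} (a,\bar c) \mathrel{\rho} (b,\bar c) \mathrel{\pi} (b',\bar d)$ exhibits a member of $\Pi$ (by closure under composition and converse) relating $(a',\bar d)$ to $(b',\bar d)$, so $a' \approx_\Pi b'$; the reverse implication follows by running the same argument with $\pi^{-1}$. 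Once $\approx_\Pi \in \Inv(\Pi)$, the formula $x \approx_\Pi y$ belongs to $\La_{\infty\infty}^-(\Inv(\Pi))$, so $a \sim_{\Inv(\Pi)} b$ yields $(a \approx_\Pi a) \leftrightarrow (b \approx_\Pi a)$, and reflexivity of $\approx_\Pi$ then gives $a \approx_\Pi b$.

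For part (2) the strategy is symmetric. The implication $a \approx_{\Sim(\Q)} b \Rightarrow a \sim_\Q b$ runs exactly as the easy direction above, now using Lemma~\ref{bijective}(3) in place of Lemma~\ref{bijective2}(3): for each two-variable $\phi$ of $\La_{\infty\infty}^-(\Q)$ and each $c$, a witness $\pi \in \Sim(\Q)$ with $(a,c) \mathrel\pi (b,c)$ forces $\phi(a,c) \leftrightarrow \phi(b,c)$, and Lemma~\ref{oneisenough} concludes. For $a \sim_\Q b \Rightarrow a \approx_{\Sim(\Q)} b$ the key step is that $\sim_\Q$ is itself a member of $\Sim(\Q)$. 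This I would check directly from the definition of $\Sim(\Q)$: every first-order relation $R \in \Q$ is invariant under $\sim_\Q$ (it is definable by its own atomic formula, so this is Proposition~\ref{respect}(1)); and every quantifier $Q \in \Q$ is $\sim_\Q$-invariant under $\sim_\Q$ because, on arguments $R_i, S_i$ already invariant under $\sim_\Q$, reflexivity of $\sim_\Q$ collapses the induced relation $R_i \mathrel{\sim_\Q} S_i$ to $R_i = S_i$, so the invariance condition holds trivially. With $\sim_\Q \in \Sim(\Q)$ in hand, given $a \sim_\Q b$ and any $\bar c$, reflexivity of $\sim_\Q$ gives $(a,\bar c) \mathrel{\sim_\Q} (b,\bar c)$ componentwise, so $\sim_\Q$ witnesses $a \approx_{\Sim(\Q)} b$.

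The main obstacle is the hard direction of part (1), namely showing $\approx_\Pi \in \Inv(\Pi)$. Everything else reduces to a single application of a transfer lemma plus a reflexivity trick, but the invariance of $\approx_\Pi$ under each $\pi \in \Pi$ genuinely uses closure of $\Pi$ under both composition and converse, and the pull-back of the test tuple $\bar d$ to some $\bar c$ with $\bar c \mathrel\pi \bar d$ relies on $\pi$ being a similarity (co-totality) rather than a mere partial map. Care is also needed in part (2) to fix precisely what $R_i \mathrel{\sim_\Q} S_i$ means for relations, as in the convention behind the notation $R \mathrel\pi S$ of Lemma~\ref{bijective}(2), so that the collapse to $R_i = S_i$ is justified.
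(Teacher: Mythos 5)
Your proof is correct, and on the two hard inclusions it takes a genuinely different---and somewhat leaner---route than the paper, while the easy inclusions are handled exactly as in the paper (Lemma \ref{bijective2}(3) resp.\ Lemma \ref{bijective}(3) together with Lemma \ref{oneisenough}). In part (1) the paper does not work with $\approx_\Pi$ directly: for each tuple $a\bar c$ it introduces the orbit relation $H^\Pi_{a\bar c}$ of all $\Pi$-images of $a\bar c$, checks its $\Pi$-invariance by composing with $\pi$ or $\pi^{\smallsmile}$ on one side only, and feeds the corresponding predicate, with parameters $\bar c$, into the definition of $\sim_{\Inv(\Pi)}$. You instead verify once and for all that the single binary relation $\approx_\Pi$ lies in $\Inv(\Pi)$; your conjugation chain $(a',\bar d)\mathrel{\pi^{-1}}(a,\bar c)\mathrel{\rho}(b,\bar c)\mathrel{\pi}(b',\bar d)$ is sound, and you rightly flag that pulling the test tuple $\bar d$ back to $\bar c$ uses co-totality of $\pi$---the one extra subtlety your route incurs that the paper's one-sided orbit computation avoids. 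In part (2) both arguments hinge on $\sim_\Q\in\Sim(\Q)$, established the same way (your appeal to Proposition \ref{respect}(1) for the relations replaces the paper's coordinate-by-coordinate substitution, and both collapse $R\mathrel{\sim_\Q}S$ to $R=S$ by reflexivity, given the convention that $R\mathrel{\pi}S$ means $\bar a\in R$ iff $\bar b\in S$ whenever $\bar a\mathrel\pi\bar b$, which you correctly flag). But where the paper then takes a detour---showing the orbit relation $H^{\Sim(\Q)}_{a\bar c}$ is invariant under $\sim_\Q$, hence definable with parameters by Proposition \ref{respect}(1), and extracting a witness from $a\sim_\Q b$---you simply observe that $\sim_\Q$ itself, being reflexive and now known to lie in $\Sim(\Q)$, directly witnesses $a\approx_{\Sim(\Q)}b$; this is a genuine simplification, showing the definability detour to be dispensable. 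Two small points you should make explicit: in part (1) the reflexivity trick first yields $b\approx_\Pi a$ (from $(a\approx_\Pi a)\leftrightarrow(b\approx_\Pi a)$), so you need symmetry of $\approx_\Pi$, available since $\Pi$ is closed under converses; and reflexivity of $\approx_\Pi$ itself deserves a word (for every tuple some element of the nonempty monoid $\Pi$, e.g.\ a composite of some $\pi$ with its converse, relates it to itself)---though the paper's own proof silently uses the same fact when asserting that $P(a,\bar c)$ holds.
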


\begin{proof}
(1) First, $\mathord\approx_\Pi \subseteq \mathord\sim_{\Inv (\Pi)}$. Assume $a \approx_\Pi b$. For
any $c \in \Omega$, there is a $\pi \in \Pi$ such that $ac~\pi~bc$, hence, by (3) in Lemma
\ref{bijective2} and since $\Pi$ is full, for every formula $\phi(x,y)$ in
$\La_{\infty\infty}^-(\Inv(\Pi))$, we have $\Omega , \Inv(\Pi) \vDash \phi(a,c)$ iff $\Omega ,
\Inv(\Pi) \vDash \phi (b,c)$. By Lemma \ref{oneisenough}, this suffices for $a \sim_{\Inv (\Pi)} b$.

Second, $\mathord\approx_\Pi \supseteq \mathord\sim_{\Inv (\Pi)}$. Assume $a \sim_{\Inv (\Pi)} b$.
For any $\bar c \in \Omega ^k$, define a relation $H^\Pi_{a\bar c}$ of arity $k+1$ by $H^\Pi_{a\bar
c} = \{ \bar d \in \Omega ^{k+1}~|~\text{there is } \pi \in \Pi \text{ such that } a \bar
c~\pi~\bar d \}$. Check that $H^\Pi_{a\bar c}$ is invariant under $\Pi$. Assume that $\bar d~ \pi
~\bar d'$ for some $\pi \in \Pi$. If $\bar d \in H^\Pi_{a\bar c}$, there is a $\pi ' \in \Pi$ with
$a,\bar c~\pi '~\bar d$. Since $\Pi$ is closed under composition, $\pi ' \circ \pi \in \Pi$, and
then $a , \bar c~\pi '\circ \pi~\bar d'$ implies $\bar d' \in H_{a\bar c,\Pi}$. If $\bar d' \in
H^\Pi_{a\bar c}$, there is a $\pi ' \in \Pi$ with $a,\bar c~\pi '~\bar d '$. Since $\Pi$ is closed
under composition and taking converses, $\pi ' \circ \pi^{\smallsmile} \in \Pi$, and then $a ,\bar
c~\pi ' \circ \pi ^{\smallsmile}~\bar d$ implies $\bar d \in H^\Pi_{a\bar c}$. Therefore $H^\Pi_{a\bar
c}$ is the interpretation of a predicate symbol $P$ in the language
$\La_{\infty\infty}^-(\Inv(\Pi))$. By definition, $\Omega , \Inv(\Pi) \vDash
P(a,\bar{c})$. Since $a \sim_{\Inv(\Pi)} b$, this implies $\Omega ,
\Inv(\Pi) \vDash P(b,\bar{c})$, hence there is a $\pi \in \Pi$ such that
$a\bar{c}~\pi~b\bar{c}$.

(2) First, $\sim_\Q \supseteq \approx_{\Sim(\Q)}$. Assume $a \approx_{\Sim(\Q)} b$. For any $c \in
\Omega$, there is a $\pi \in \Pi$ such that $ac~\pi~bc$, hence, by (3) in Lemma \ref{bijective}, for
every formula $\phi(x,y)$ in $\La_{\infty\infty}^-(\Q)$, we have $\Omega , \Q
\vDash \phi(a,c)$ iff $\Omega , \Q \vDash \phi (b,c)$. By Lemma \ref{oneisenough}, this
suffices for $a \sim_{\Inv (\Pi)} b$.

Second, $\sim_\Q \subseteq \approx_{\Sim(\Q)}$. Assume $a \sim_\Q b$. Let $\bar c \in \Omega^k$, we
need to find a $\pi \in \approx_{\Sim(\Q)}$ with $a\bar c~\pi b \bar c$. It is sufficient to show
that $H^{\Sim(\Q)}_{a\bar c}$ is definable in $\La_{\infty\infty}^-(\Q)$ by some formula
$\varphi$, because then $\Omega, \Q \vDash \varphi(a,\bar c)$ and $a \sim_\Q b$ implies
$\Omega, \Q \vDash \varphi(b,\bar c)$, giving the needed $\pi$.

$H^{\Sim(\Q)}_{a\bar c}$ is invariant under $\Sim(\Q)$ since $\Sim(\Q)$ is closed under
compositions. By
Proposition \ref{respect}, $H_{a\bar c}^{\Sim(\Q)}$ is definable in
$\La_{\infty\infty}^-(\Q,\Omega)$
if $H_{a\bar c}^{\Sim(\Q)}$ is invariant under $\sim_\Q$. Thus, it is sufficient to show that $\sim_\Q
\in \Sim(\Q)$. Let $R$ be an $n$-ary relation in $\Q$, $d_1 ... d_n \in R$ and
$d_1 ... d_n~\sim_\Q~d'_1 ... d'_n$. $d_1 ... d_n \in R$ and $d_1\sim _\Q d'_1$,
hence $d'_1d_2...d_n \in R$, and by
repeating this reasoning, $d'_1 ... d'_n \in R$. Let $Q$ be a quantifier in $\Q$, $\bar R$ and $\bar
S$ sequences of relations on $\Omega$ invariant under $\sim_\Q$ and of a type appropriate for $Q$.
We need that if $\bar R \sim_\Q \bar S$, then $\bar R \in Q$ iff $\bar S \in Q$. It is sufficient
to note that if $R$ is invariant under $\sim_\Q$ and $R \sim_\Q S$, then $R =S$. Let $\bar d \in
R$ and $\bar e$ such that $\bar e \sim _\Q \bar d$. $\bar e \in R$ since $R$ is invariant under
$\sim_\Q$, hence since $R \sim _\Q S$, $\bar d \in S$. Similarly, let $\bar d \in S$ and $\bar e$
such that $\bar e \sim_\Q \bar d$. Since $R \sim_\Q S$, $\bar e \in R$, and then $\bar e \sim_\Q
\bar d$ and the fact that $R$ is invariant under $\sim_\Q$ implies $\bar d \in R$.
\end{proof}

\begin{lemma} $\Sim(\Q)$ is a full monoid.
\end{lemma}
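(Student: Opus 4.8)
The plan is to verify the three defining clauses of fullness for $\Pi=\Sim(\Q)$: closure under composition and under converse (which together make it a monoid with involution), the membership $\mathord\approx_{\Sim(\Q)}\in\Sim(\Q)$, and closure under subsimilarities. Throughout I write $\sim$ for $\sim_\Q$. In each clause the part concerning a first-order relation $R\in\Q$ (which need only be invariant) is immediate, so the work is always on the quantifier side, where I intend to lean on Lemma \ref{bijective}: that $\pi/\mathord\sim$ is a permutation of $\Omega/\mathord\sim$, and that a $\sim$-invariant relation satisfies $R\mathrel\pi\pi(R)$.

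I would dispatch the easy clauses first. For converses, if $R$ is invariant under $\pi$ it is invariant under $\pi^{-1}$, since $\bar a\mathrel{\pi^{-1}}\bar b$ just means $\bar b\mathrel\pi\bar a$; and if $Q$ is $\sim$-invariant under $\pi$ then, given $\sim$-invariant $\bar R,\bar S$ with $\bar R\mathrel{\pi^{-1}}\bar S$, we have $\bar S\mathrel\pi\bar R$, so the $\pi$-clause applied to the pair $(\bar S,\bar R)$ gives $\bar R\in Q$ iff $\bar S\in Q$. For the membership $\mathord\approx_{\Sim(\Q)}\in\Sim(\Q)$ there is nothing new to do: by Proposition \ref{allisgood}(2) the relation $\approx_{\Sim(\Q)}$ coincides with $\sim_\Q$, and the second half of the proof of that proposition already establishes $\sim_\Q\in\Sim(\Q)$ (being an equivalence relation it is reflexive, hence a genuine similarity).

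For subsimilarities, let $\pi\in\Sim(\Q)$ and let $\pi'\subseteq\pi$ be a similarity. Invariance of relations descends to $\pi'$ because $\pi'\subseteq\pi$. For a quantifier $Q$ the step I would isolate is the claim: if $R$ and $S$ are invariant under $\sim$ and $R\mathrel{\pi'}S$, then in fact $R\mathrel\pi S$. To see this, take any $\bar a\mathrel\pi\bar b$; since $\pi'$ is total there is $\bar b'$ with $\bar a\mathrel{\pi'}\bar b'$, whence $\bar a\mathrel\pi\bar b'$ as well, and the function property of $\pi/\mathord\sim$ (Lemma \ref{bijective}(1)) gives $\bar b\sim\bar b'$; then $\bar a\in R$ iff $\bar b'\in S$ (from $R\mathrel{\pi'}S$) iff $\bar b\in S$ (from $\sim$-invariance of $S$). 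Feeding $R\mathrel\pi S$ into the $\sim$-invariance of $Q$ under $\pi$ closes the quantifier case.

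The composition clause is where I expect the real work, so I would do it last. Given $\pi_1,\pi_2\in\Sim(\Q)$, relations invariant under both are invariant under $\pi_1\circ\pi_2$ by chasing a witness of the composite through the two invariances. For a quantifier $Q$, take $\sim$-invariant $\bar R,\bar T$ with $\bar R\mathrel{\pi_1\circ\pi_2}\bar T$. The obstacle is that an arbitrary intermediate witness of the composition need not respect $\sim$, so the $\pi_1$- and $\pi_2$-clauses of $\sim$-invariance cannot be applied to it. My fix is to choose the canonical intermediate $\bar S=\pi_1(\bar R)$: Lemma \ref{bijective}(2) gives $\bar R\mathrel{\pi_1}\bar S$, and using that $\pi_1/\mathord\sim$ is a \emph{bijection} (Lemma \ref{bijective}(1)) one checks that $\bar S$ is again invariant under $\sim$. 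It then remains to recover the second leg $\bar S\mathrel{\pi_2}\bar T$: given $\bar b\mathrel{\pi_2}\bar c$, pick $\bar a\mathrel{\pi_1}\bar b$ by totality of $\pi_1$, so that $\bar a\mathrel{\pi_1\circ\pi_2}\bar c$, and read off $\bar b\in S$ iff $\bar a\in R$ iff $\bar c\in T$ from $\bar R\mathrel{\pi_1}\bar S$ and $\bar R\mathrel{\pi_1\circ\pi_2}\bar T$. With $\bar S$ invariant and both legs in hand, applying the $\sim$-invariance of $Q$ first under $\pi_1$ and then under $\pi_2$ yields $\bar R\in Q$ iff $\bar S\in Q$ iff $\bar T\in Q$. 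This completes all four clauses, so $\Sim(\Q)$ is full.
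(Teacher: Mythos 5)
Your proof is correct and takes essentially the same route as the paper: converses are trivial, composition goes through Lemma \ref{bijective} (your canonical intermediate $\pi_1(\bar R)$, with $\sim$-invariance from bijectivity of $\pi_1/\mathord\sim$, is exactly the detail the paper compresses into ``by using (2) of Lemma \ref{bijective} we can easily see''), and your subsimilarity argument is the paper's contradiction argument stated directly. You also explicitly discharge the clause $\mathord\approx_{\Sim(\Q)} \in \Sim(\Q)$ by appeal to Proposition \ref{allisgood}(2) --- correctly, and with no circularity since you prove composition closure independently --- whereas the paper's own proof of the lemma silently leaves that clause to the proof of that proposition, where $\sim_\Q \in \Sim(\Q)$ is established.
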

\begin{proof}
Let $\Pi = \Sim(\Q)$. To check that $\Pi$ is closed under taking converses is trivial, and by using
(2) of Lemma \ref{bijective} we can easily see that $\Pi$ is also closed under compositions.

We prove that if $\pi \in \Pi$ and $\pi' \subseteq \pi$ is a similarity relation then $\pi' \in
\Pi$. It is trivial to
check that every relation of $\Q$ is invariant under $\pi'$. For the case of quantifiers let $R
\mathrel{\pi'} S$ be such that
$R$ and $S$ are invariant under $\sim_\Q$. Then $R \mathrel\pi S$ because if not we may assume,
without loss of generality, there are
$\bar a \mathrel\pi \bar b$, $\bar a \in R$ and $\bar b \notin S$. There are $\bar c$ such that
$\bar a \mathrel{\pi'} \bar c$ and thus, since both $\bar a \mathrel\pi \bar b$ and $\bar a
\mathrel\pi \bar c$ we have by (1) in Lemma \ref{bijective} that $\bar b \sim_\Q \bar c$
contradicting that $S$ is invariant under $\sim_\Q$.
\end{proof}

Given a set of operations $\Q$ we now know that $\sim_\Q$ is the same as $\approx_{\Sim(\Q)}$ so
when working with $\Inv(\Sim(\Q))$ we can use $\sim_\Q$ instead of $\approx_{\Sim(\Q)}$; this will
implicity be used in the following proposition.

\begin{prop}\label{prop:aut}
Let $\Q$ be a set of operators and $\sim$ be $\sim_\Q$. We have
\begin{enumerate}
\item $\Sim(\Q)/\mathord\sim$ is $\Aut(\Q/\mathord\sim)$, and
\item $\Inv(\Sim(\Q))$ is the set of all quantifers $Q$ such that $Q/\mathord\sim$ is in
$\Inv(\Aut(\Q/\mathord\sim) )$.
\end{enumerate}
\end{prop}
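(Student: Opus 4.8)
The plan is to transport the whole problem to the quotient domain $\Omega/\mathord\sim$, where Lemma~\ref{bijective}(1) promotes each $\pi\in\Sim(\Q)$ to a genuine permutation $\pi/\mathord\sim$ of $\Omega/\mathord\sim$. This lets me treat the monoid $\Sim(\Q)$ acting on $\Omega$ as a group of permutations acting on $\Omega/\mathord\sim$, at which point both parts become statements about the ordinary $\Aut$/$\Inv$ duality on that quotient.

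The technical heart, which I would isolate first as a commutation lemma, is that $\cdot/\mathord\sim$ intertwines the action of $\pi$ on $\sim$-invariant relations with the action of $\pi/\mathord\sim$: if $R\subseteq\Omega^k$ is invariant under $\sim$ and $\pi\in\Sim(\Q)$, then $\pi(R)$ is again $\sim$-invariant and $(\pi(R))/\mathord\sim=(\pi/\mathord\sim)(R/\mathord\sim)$. The invariance of $\pi(R)$ is the step I expect to be the main obstacle: to push a $\sim$-equivalent $\bar b'$ of some $\bar b\in\pi(R)$ back into $\pi(R)$ I would pick, using totality of the similarity, some $\bar a''\mathrel\pi\bar b'$, observe $(\pi/\mathord\sim)[\bar a'']=[\bar b']=[\bar b]=(\pi/\mathord\sim)[\bar a]$ for the witness $\bar a\in R$, and then invoke the \emph{injectivity} of $\pi/\mathord\sim$ (Lemma~\ref{bijective}(1)) to conclude $\bar a''\sim\bar a$, whence $\bar a''\in R$ by invariance and $\bar b'\in\pi(R)$. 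The identity $(\pi(R))/\mathord\sim=(\pi/\mathord\sim)(R/\mathord\sim)$ is then a routine unwinding of the definitions of $\cdot/\mathord\sim$ and of the image under a similarity, using $\bar a\in R\Leftrightarrow[\bar a]\in R/\mathord\sim$.

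For part~(1), I would prove both inclusions. For $\Sim(\Q)/\mathord\sim\subseteq\Aut(\Q/\mathord\sim)$, take $\pi\in\Sim(\Q)$; each relation $R\in\Q$ and each quantifier $Q\in\Q$ is $\sim$-invariant under $\pi$, and the commutation lemma (together with $R_i=\cup(R_i/\mathord\sim)$ and the definition of $Q/\mathord\sim$) shows that $\pi/\mathord\sim$ preserves $R/\mathord\sim$ and $Q/\mathord\sim$. For the reverse inclusion, given $g\in\Aut(\Q/\mathord\sim)$ I would lift it by setting $\pi=\set{(a,b)\mid g[a]=[b]}$; this is a similarity because $g$ is a bijection between (nonempty) equivalence classes, it satisfies $\pi/\mathord\sim=g$ by construction, and the fact that $g$ preserves every $R/\mathord\sim$ and $Q/\mathord\sim$ translates back, via the commutation lemma, into $\sim$-invariance of every relation and quantifier of $\Q$ under $\pi$, i.e. $\pi\in\Sim(\Q)$.

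For part~(2), I would first use Proposition~\ref{allisgood}(2) to replace $\approx_{\Sim(\Q)}$ by $\sim_\Q=\sim$ in the definition of $\Inv(\Sim(\Q))$, so that membership amounts to $\sim$-invariance under every $\pi\in\Sim(\Q)$. The crux is the equivalence, for each $\pi\in\Sim(\Q)$ with $g=\pi/\mathord\sim$, that $Q$ is $\sim$-invariant under $\pi$ iff $Q/\mathord\sim$ is invariant (in the permutation sense) under $g$. Both directions follow from the commutation lemma: given $\sim$-invariant $\bar R\mathrel\pi\bar S$ one checks that $S_i=\pi(R_i)$ is the unique $\sim$-invariant relation with $R_i\mathrel\pi S_i$, so $\bar S/\mathord\sim=g(\bar R/\mathord\sim)$, and then $\bar R\in Q\Leftrightarrow\bar R/\mathord\sim\in Q/\mathord\sim\Leftrightarrow g(\bar R/\mathord\sim)\in Q/\mathord\sim\Leftrightarrow\bar S\in Q$, the middle equivalence being exactly $g$-invariance of $Q/\mathord\sim$; the converse runs the same chain backwards. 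Quantifying over all $\pi$ and applying part~(1) to identify $\Sim(\Q)/\mathord\sim$ with $\Aut(\Q/\mathord\sim)$ then yields $Q\in\Inv(\Sim(\Q))$ iff $Q/\mathord\sim\in\Inv(\Aut(\Q/\mathord\sim))$.
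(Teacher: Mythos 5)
Your proposal is correct and takes essentially the same route as the paper's own proof: both arguments transport the problem to $\Omega/\mathord\sim$ via Lemma \ref{bijective}(1), lift $g \in \Aut(\Q/\mathord\sim)$ to a similarity by setting $a \mathrel\pi b$ iff $g([a])=[b]$, and run the same $\cup$/quotient equivalence chains, with Proposition \ref{allisgood}(2) licensing the replacement of $\approx_{\Sim(\Q)}$ by $\sim_\Q$ (which the paper flags just before the proposition). Your separately stated commutation lemma, $(\pi(R))/\mathord\sim=(\pi/\mathord\sim)(R/\mathord\sim)$ together with the $\sim$-invariance of $\pi(R)$, is exactly the identity $\pi(\cup R)=\cup f(R)$ that the paper uses inline on the strength of Lemma \ref{bijective}(1)--(2); you merely make explicit a step the paper leaves implicit.
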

\begin{proof}
	(1)
	For the left-to-right inclusion let $\pi \in \Sim(\Q)$. By  Lemma \ref{bijective}, $f=\pi/\mathord\sim$ is a
bijection on $\Omega/\mathord\sim$ and if $Q \in \Q$ then $R
\in Q/\mathord\sim$ iff $\cup R \in Q$ iff $\pi(\cup R) \in Q$ iff $\cup f(R)
\in Q$ iff $f(R) \in Q/\mathord\sim$.

For the other inclusion let $f \in \Aut(\Q/\mathord\sim)$.
Define $a \mathrel\pi b$ iff $f([a])=[b]$. Then $\pi$ is a similarity
relation on $\Omega$ and $f = \pi /\mathord\sim$.
Let $Q \in \Q$ and $R$ is invariant under $\sim$ then $R \in Q$ iff
$R/\mathord\sim \in
Q/\mathord\sim $ iff $f(R/\mathord\sim) \in Q/\mathord\sim $ iff $\cup
f(R/\mathord\sim) \in Q$. But $\cup f(R/\mathord\sim) = \pi(R)$. We
have proved that $R \in Q$ iff $\pi(R) \in Q$, and thus $Q$ is
invariant under $\pi$.

(2) Take $Q \in \Inv(\Sim(\Q))$ and $f \in \Aut(\Q/\mathord\sim)$.
By (1) choose $\pi \in \Sim(\Q)$ such that
$f=\pi/\mathord\sim$. Then $R
\in Q/\mathord\sim$ iff $\cup R \in Q$ iff $\pi(\cup R) \in Q$ iff
$\pi(\cup R)/\mathord\sim \in Q/\mathord\sim$, but $ \pi(\cup R)/\mathord\sim =
f(R)$.

On the other hand take $Q$ such that $Q/\mathord\sim \in
\Inv(\Aut(\Q /\mathord\sim))$, $R$ invariant under $\sim$ and $\pi
\in \Sim(\Q)$, then $f= \pi/\mathord\sim$ is in
$\Aut(\Q /\mathord\sim)$. Thus $R \in Q$ iff $R/\mathord\sim
\in Q/\mathord\sim$ iff $f(R/\mathord\sim)\in Q/\mathord\sim$ iff
$\cup f(R/\mathord\sim)\in Q$, but $\cup f(R/\mathord\sim)= \pi(R)$.
\end{proof}

From this proposition we get what was announced as the main theorem of this section:

\begin{reptheorem}{cor} Let $\Q$ be a set of operations and $\Pi$ a set of similarities, then
\begin{enumerate}
\item $Q \in \Inv(\Sim(\Q))$ iff $Q^{\upharpoonright q}$ is definable in
$\La_{\infty\infty}^-(\Q)$, and $R \in \Inv(\Sim(\Q))$ iff $R$ is definable in $\La_{\infty\infty}^-(\Q)$.
\item $\Sim(\Inv(\Pi))$ is the smallest full monoid including $\Pi$.
\end{enumerate}
\end{reptheorem}

\begin{proof}
(1) By Proposition \ref{prop:aut}, $Q \in \Inv(\Sim(\Q))$ iff
$Q/\mathord\sim \in \Inv(\Aut(\Q/\mathord\sim) )$. Thus,
by Theorem \ref{thm:kras2}, $Q/\mathord\sim$ is
definable in $\La_{\infty\infty}(\Q/\mathord\sim)$ by a sentence $\phi$.
Let $\phi'$ be the translation of $\phi$ into $\La_{\infty\infty}^-(\Q)$ where $=$
is replaced by $\sim$.

By induction of formulas we see that for all $\bar a \in \Omega$ and all $\psi(\bar x)$ in
$\La_{\infty\infty}(\Q/\mathord\sim)$:
$$\Omega/\mathord\sim \models \psi([\bar{a}]) \text{ iff } \Omega \models \psi'(\bar a),$$
where, again, $\psi'$ is the translation of $\psi$  into $\La_{\infty\infty}^-(\Q)$.

It follows that $\phi'$ defines $\cup (Q/\mathord\sim)$ and since $\sim$ is definable in
$\La_{\infty\infty}^-(\Q)$ we have what we wanted.

(2) Follows by the same argument as in (1).

(3) We prove that for a full monoid $\Pi$ we have that $\Sim(\Inv(\Pi))=\Pi$. The general theorem
then follows from the fact that every $\Sim(\Q)$ is a full monoid.

The inclusion $\Pi \subseteq \Sim(\Inv(\Pi))$ is trivial. For the other includion let $H=
\Pi/\mathord\approx$. It is a group of permutations by Lemma \ref{bijective2}. By straightforward
checking we see that
 $$\Inv(\Pi) / \mathord\approx = \Inv(H).$$

From (1) in Proposition \ref{prop:aut} we get that $\Sim(\Inv(\Pi))/\mathord\approx =
\Aut(\Inv(\Pi)/\mathord\approx)$ and thus that this is equal to $\Aut(\Inv(H))$, which by Theorem
\ref{thm:kras2} is nothing else than $H$. Thus,
$$\Sim(\Inv(\Pi))/\mathord\approx = \Pi/\mathord\approx$$
and by the assumption that $\Pi$ is full we get that $\Sim(\Inv(\Pi)) = \Pi$: If $\pi \in
\Sim(\Inv(\Pi))$ then there is $\pi' \in \Pi$ such that $\pi'/\mathord\approx =
\pi/\mathord\approx$, and then $\pi \subseteq (\mathord\approx \mathrel\circ \pi' \mathrel\circ
\mathord\approx)$, and thus $\pi \in \Pi$.
\end{proof}

\section{Concluding remarks}\label{section:final}

\subsection{Invariance under permutation as a logicality criterion}

Let us first go back to languages with equality and groups of permutations. The main benefit of a Krasnerian approach to McGee's result is to show that the correspondence between purportedly logical operations on a domain and the symmetric group on that set is a special case of the general duality which holds between sets of relations and groups of permutations, when sets of relations consist in first-order and second-order relations closed under definability in
$\lii$. The light thus shed on McGee's result is ambiguous, depending on how the closure condition is looked at. One might insist that groups of permutations are consequently shown to be a sure guide to characterizing sets of relations. When $\lii$ provides the logical context, \textit{any} relevant structure may be characterized in terms of invariance under permutation, and therefore the structure consisting of all and only logical relations on a given domain, in particular, is thus characterizable. Now which group of permutations should be the one giving us all (and only the) logical relations? Since logical notions have generality as their distinguishing feature, we should go for the largest possible group, and take all permutations: this is the gist of the generality argument. Since logic is formal and does not make differences between objects, we should require insensitivity to any possible way of switching objects, and take again all permutations: this is the gist of the formality argument. Of course, several steps are in need of further justification. Maybe identifying `being a logical notion' and `being a general notion' is too quick, because generality remains underspecified. Or maybe `being formal' and `not making distinctions between objects' are not quite the same -- see MacFarlane \cite{MacFarlane:00} for a thorough discussion of these issues. But our goal here is not to challenge the relevance of the conceptual analysis in terms of generality or indifference to objects. Our point is rather to suggest that these arguments may be deemed to be on the right track \textit{only if we know that logical relations are characterizable in terms of invariants of a group of permutations}. Once this characterizability is granted, both arguments go through, and nicely converge.

Seeing McGee's result as a special case of a more encompassing Krasnerian correspondence reveals $\lii$ to be a built-in feature of the framework. Definability in $\lii$ is the closure condition on sets of relations which corresponds to working with groups of permutations, rather than with some other kind of transformations. As soon as we choose to explicate logicality (either qua generality or qua formality) in terms of groups of permutation, we let infinitary syntax sneak in. Now the generality and formality arguments tell us which group of permutations we should pick, if we are to single out logical relations by picking one, but they do not tell us that we should single out logical relations by picking a group of permutations to start with. Thus, the fact that the infinitary logic $\lii$ ends up being a full-blown logic on this account of logicality might be deemed an artificial effect of the invariance under permutation framework. McGee himself took his result to be an argument in favor of invariance for permutation as a logicality criterion, ``inasmuch as every operation on the list is intuitively logical'' (\cite{McGee:96}, p.567), the list being the list of basic operations of $\lii$. But this presupposes that having reasons to count conjunction as logical automatically counts as having reasons to count infinitary conjunction, and similarly for quantification.

Blaming invariance under permutation for its infinitary upshot is no new criticism (see \cite{Feferman:99}, \cite{Bonnay:08} and \cite{Feferman:10}), but, in the light of the generalized Krasnerian correspondence,  this infinitary burden is seen to be the very consequence of the use of permutations in order to demarcate the extensions that are admissible for logical constants.

\subsection{Invariance under similarity as a logicality criterion}

Invariance under similarities has been put forward as an alternative logicality criterion by Feferman \cite{Feferman:99}.\footnote{Feferman now favors mixed approches in which invariance only provides necessary conditions for logicality, see \cite{Feferman:10} and \cite{Feferman:11}.} The shift from permutations to similarities makes for greater generality, which is crucial for the generality argument. It also provides a way to escape the commitment of permutations to cardinalities. Every cardinality quantifier, no matter how wild (`there are at least $\aleph_{17}$ many') is permutation invariant. Shifting to invariance under similarities expells these wild creatures from the paradise of logical quantifiers.\footnote{Feferman had another reason to favor similarities, namely the fact that they can relate domains of different sizes, and hence force a greater homogeneity for the extensions of quantifiers across domains. We do not deal with across-domain invariance here.}

Feferman shows that an operation is definable in first-order logic without equality just in case it is definable in the $\lambda$-calculus from homomorphism invariant\footnote{Homomorphism invariance is equivalent to invariance under similarities.} \textit{monadic} quantifiers and asks whether ``there is a natural characterization of the homomorphims invariant propositional operations in general, in terms of logics extending the predicate calculus'' (\cite{Feferman:99}, p. 47). The question is all the more pressing that the syntactic restriction to monadic quantifiers lacks a proper justification. With reference to an unpublished proof given in \cite{bonnay06}, it has been claimed that invariance under homomorphism corresponds to definability in pure $\liim$ (see \cite{Bonnay:08} and \cite{Feferman:10}). The proof, however, was faulty, breaking down because of undefinable subsets. Theorem \ref{cor} provides a correct generalization. However, it does not answer Feferman's exact question, since the correspondence we eventually get only holds with respect to the action of quantifiers on definable sets. What we know by Theorem \ref{cor} is that invariance under homomorphism corresponds to definability in pure $\liim$ when quantifiers are restricted to their action on definable subsets of the domain. Whether the original claim was correct is still unknown.

The restriction to definable sets notwithstanding, Theorem \ref{cor} shows that the shift from permutations to similarities or homorphisms does not change the picture when it comes to the infinitary explosion of invariance criteria: infinitary syntax is every bit as much hardwired in invariance under similarities as it is in invariance under permutations. Moreover, invariance criteria do not seem well suited to help us adjudicate the status of equality as a logical constant or not. As highlighted by the close parallelism between Theorem \ref{thm:kras2} and Theorem \ref{cor}, allowing identity at level of relations exactly corresponds to banning non-injective functions at the level of invariance conditions. Whether there are better grounds on which to adjudicate the issue, or whether it should be considered as primarily a matter of convention remains to be seen.

\subsection{Further perspective}

To conclude, we would like to mention two lines of further research. The first one is somewhat tangential to the main project. We have seen that automorphism groups of monadic second order structures are characterized as closed groups in a way similar to the first-order case. What do these closed groups tell us about the structures they come from? The question is whether remarkable properties of monadic second order structures can be reduced to properties of automorphism groups -- a famous example in the first-order case is the Ryll-Nardzewski theorem which says that being $\aleph_0$ categorical is such a property.

The second line of research concerns further inquiries into the duality between sets of transformations and sets of relations closed with respect to definability in a given logic. Theorem \ref{cor} leaves as an open question whether the restriction to the definable parts of quantifiers is mandatory. More generally, this duality could be extended to other kinds of transformations and less demanding closure conditions. In \cite{Barw73}, Barwise provides such a result for first-order relations with respect to semi-automorphisms (automorphisms between substructures which are part of a back and forth system) and closure under definability in $\lio$. This suggests generalizing again to second-order relations. Another tempting move would consist in getting to $\loo$ by suitably weakening the conditions on semi-automorphisms. Finally, the connection between properties of the logical syntax and properties of the transformations still remains to be fully understood. It would be nice to achieve even greater generality by connecting explicitly abstract properties of transformations (e.g. the depth of Ehrenfeucht-Fra\"{\i}ss\'{e} games) and properties of the syntax for the logic providing the closure condition.

\bibliography{krasner}
\bibliographystyle{asl}

\end{document}